\newtheorem{theorem}{Theorem}[section]
\newtheorem{lemma}[theorem]{Lemma}
\newtheorem{corollary}[theorem]{Corollary}
\theoremstyle{definition}
\numberwithin{equation}{section}
\newcommand{\sgn}{\operatorname{sgn}}
\def\imod#1{\allowbreak\mkern5mu({\operator@font mod}\,\,#1)}
\let\@@pmod\pmod
\DeclareRobustCommand{\pmod}{\@ifstar\@pmods\@@pmod}
\def\@pmods#1{\mkern4mu({\operator@font mod}\mkern 6mu#1)}
\begin{document}

\title[The colored Jones polynomial for double twist knots, II]{The colored Jones polynomial and Kontsevich-Zagier series for double twist knots, II}

\author{Jeremy Lovejoy}

\author{Robert Osburn}

\address{Current Address: Department of Mathematics, University of California, Berkeley, 970 Evans Hall \#3780,
Berkeley, CA 94720-3840, USA }

\address{Permanent Address: CNRS, Universit{\'e} Denis Diderot - Paris 7, Case 7014, 75205 Paris Cedex 13, FRANCE}

\address{School of Mathematics and Statistics, University College Dublin, Belfield, Dublin 4, Ireland}

\address{Max-Planck-Institut f{\"u}r Mathematik, Vivatsgasse 7, D-53111, Bonn, Germany}

\email{lovejoy@math.cnrs.fr}

\email{robert.osburn@ucd.ie}

\subjclass[2010]{33D15, 57M27}
\keywords{double twist knots, colored Jones polynomial}

\date{\today}

\begin{abstract}
Let $K_{(m,p)}$ denote the family of double twist knots where $2m-1$ and $2p$ are non-zero integers denoting the number of half-twists in each region.  Using a result of Takata, we prove a formula for the colored Jones polynomial of $K_{(-m,-p)}$ and $K_{(-m,p)}$. The latter case leads to new families of $q$-hypergeometric series generalizing the Kontsevich-Zagier series.    These generalized Kontsevich-Zagier series are conjectured to be quantum modular forms.   We also use Bailey pairs and formulas of Walsh to find Habiro-type expansions for the colored Jones polynomials of $K_{(m,p)}$ and $K_{(m,-p)}$.    
\end{abstract}

\maketitle

\section{Introduction}

Let $K$ be a knot and $J_N(K;q)$ be the usual $N$th colored Jones polynomial, normalized to be 1 for the unknot. Formulas for $J_N(K;q)$ in terms of $q$-hypergeometric series have been proved for several families of knots \cite{habiro, hikami1, hikami2, thang, masbaum, walsh}; these  have played a prominent role in numerous studies in quantum topology and modular forms \cite{bhl, gukov, hikami3, hikami4, hl1, hl2, zagier}.  In recent work \cite{loCJP}, the authors used a theorem of Takata \cite{takata} to find $q$-hypergeometric expressions for the colored Jones polynomial of double twist knots where each of the two regions consisted of an even number of half-twists.   This led to a doubly infinite family of $q$-series generalizing the famous Kontsevich-Zagier series \cite{z-1,zagier},
\begin{equation}  \label{K-Zseries}
F(q) = \sum_{n \geq 0} (1-q)(1-q^2) \cdots (1-q^n).
\end{equation}
These generalized Kontsevich-Zagier series are conjectured to be new families of quantum modular forms.   Comparing with previously known expressions for the colored Jones polynomials of double twist knots due to Lauridsen \cite{La} led to generalizations of a $q$-series ``identity'' involving $F(q)$ due to Bryson, Ono, Pitman, and Rhoades \cite{BOPR} -- namely, for any root of unity $q$ one has
\begin{equation}
F(q^{-1}) = \sum_{n \geq 0} q^{n+1}(1-q)^2 \cdots (1-q^n)^2. 
\end{equation}
For a complete description of these results, see \cite{loCJP}.

Here we turn our attention to double twist knots where one region has an odd number of half-twists.    Recall the standard $q$-hypergeometric notation
\begin{equation*}
(a)_n = (a;q)_n := \prod_{k=0}^{n-1} (1-aq^{k})
\end{equation*}
and the usual $q$-binomial coefficient

\begin{equation}
\begin{bmatrix} n \\ k \end{bmatrix} = \begin{bmatrix} n \\ k \end{bmatrix}_{q} := \frac{(q)_n}{(q)_{n-k} (q)_k}.
\end{equation}
Consider the family of double twist knots $K_{(m,p)}$ where $2m-1$ and $2p$ are non-zero integers denoting the number of half-twists in each respective region of Figure \ref{fig:dt}. Positive integers correspond to right-handed half-twists and negative integers correspond to left-handed half-twists.

\begin{figure}[ht]
\includegraphics[width=7.5cm, height=5.0cm]{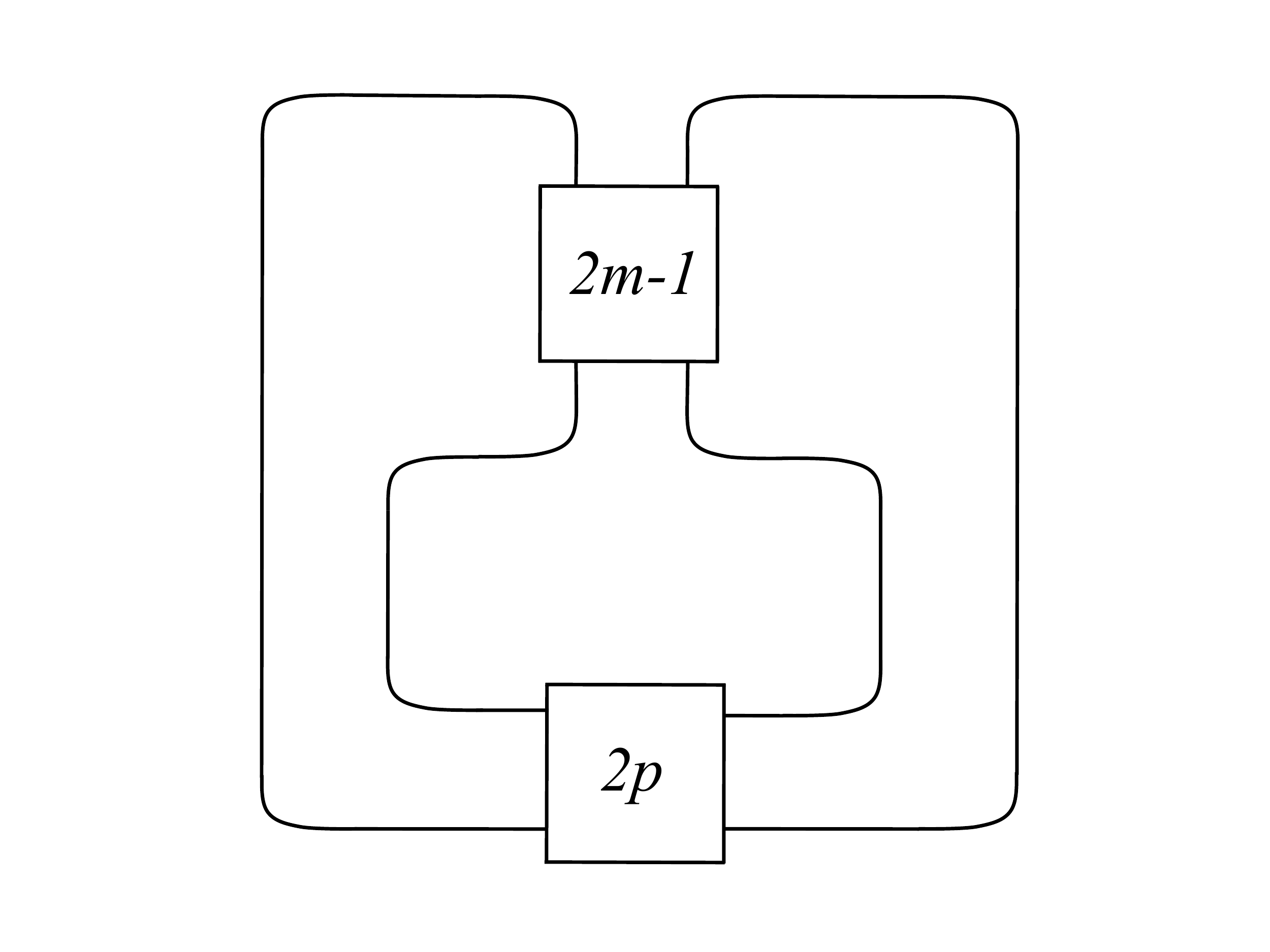}
\caption{Double twist knots}
\label{fig:dt}
\end{figure}

\noindent To state the case $K_{(-m,-p)}$, we define the functions $\epsilon_{i,j,m}$ and $\gamma_{i,m}$ by
\begin{equation} \label{epsilondef}
\epsilon_{i,j,m} = 
\begin{cases}
1, &\text{if $j \equiv -i$ or $-i-1$ $\pmod{2m+1}$}, \\
-1, &\text{if $j \equiv i$ or $i-1$ $\pmod{2m+1}$}, \\
0, &\text{otherwise}
\end{cases}
\end{equation}
where $1 \leq i < j \leq (2m+1)p-1$ with $(2m +1)\nmid i$ and $j \not \equiv m\pmod{2m+1}$ and
\begin{equation} \label{gammadef}
\gamma_{i,m} =
\begin{cases}
1, &\text{if $i \equiv 1,\dots, m-1 \pmod{2m+1}$}, \\
-1 &\text{if $i \equiv 0, m+1, \dots , 2m \pmod{2m+1}$}, \\
0 &\text{if $i \equiv m \pmod{2m+1}$}
\end{cases}
\end{equation}
where $1 \leq i \leq (2m+1)p-2$. Our first main result is the following.

\begin{theorem} \label{main1}
For positive integers $m$ and $p$, we have
\begin{align}  \label{JNK-m-p}
J&_{N}(K_{(-m,-p)}; q) \nonumber \\ 
&= q^{(p-1)(N-1)}\sum_{N-1 \geq n_{(2m+1)p-1} \geq \cdots \geq n_1 \geq 0} (q^{1-N})_{n_{(2m+1)p-1}} (-1)^{n_{(2m+1)p-1}} q^{-\binom{n_{(2m+1)p-1} + 1}{2}} \nonumber \\
&  \qquad \qquad \qquad \qquad \times\prod_{\substack{1 \leq i < j \leq (2m+1)p-1 \\ (2m +1) \nmid i \\ j \not \equiv m \pmod*{2m+1}}} q^{\epsilon_{i,j,m}n_in_j} 
 \prod_{\substack{i=1 \\ i \equiv m,\, 2m+1 \pmod*{2m+1}}}^{(2m+1)p-2} (-1)^{n_{i}} q^{N n_{i} + \binom{n_{i} + 1}{2}} \nonumber \\
 & \qquad \qquad \qquad \qquad \times \prod_{i=1}^{(2m+1)p - 2} q^{-n_i n_{i+1} + \gamma_{i, m} n_i}\begin{bmatrix} n_{i+1} \\ n_{i} \end{bmatrix}. 
\end{align}
\end{theorem}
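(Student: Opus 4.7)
The plan is to start from Takata's cyclotomic expansion \cite{takata} for the colored Jones polynomial of the family of double twist knots, specialized to the parameters $(-m,-p)$ in Figure \ref{fig:dt}. Takata's formula expresses $J_N(K_{(-m,-p)};q)$ as a nested multi-sum of $q$-Pochhammer symbols whose summation indices are governed by the number of half-twists in each region, namely $2m-1$ and $2p$. I would first write this expansion out explicitly, paying particular attention to how the odd parameter $2m-1$ (as opposed to the even $2m$ treated in \cite{loCJP}) alters the structure of the inner factors.

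Next, I would convert the raw Takata expansion into the desired ladder form. The key tool is a repeated application of a $q$-binomial identity (in the spirit of \cite{walsh}), which allows one to interpolate a single sum over a Pochhammer-type factor by a ``staircase'' of $q$-binomial coefficients $\begin{bmatrix} n_{i+1} \\ n_i\end{bmatrix}$ subject to $n_{i+1} \geq n_i$. After introducing auxiliary summation variables at each step, the sums naturally chain together to produce the ordered tuple $N-1 \geq n_{(2m+1)p-1} \geq \cdots \geq n_1 \geq 0$ appearing in the statement. The overall prefactor $q^{(p-1)(N-1)}$ and the ``boundary'' factor $(q^{1-N})_{n_{(2m+1)p-1}}(-1)^{n_{(2m+1)p-1}} q^{-\binom{n_{(2m+1)p-1}+1}{2}}$ will come out of collecting the $N$-dependent terms from Takata's outermost layer.

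The main obstacle will be bookkeeping the exponents of $q$. Every nontrivial expansion step contributes quadratic and linear monomials in the $n_i$, and these must be shown to collapse to the stated expressions $q^{\epsilon_{i,j,m} n_i n_j}$, $q^{-n_i n_{i+1}}$, $q^{\gamma_{i,m} n_i}$, and $(-1)^{n_i} q^{Nn_i + \binom{n_i+1}{2}}$ for $i \equiv m$ or $2m+1 \pmod{2m+1}$. The periodicity modulo $2m+1$ arises from the cyclic structure of Takata's sum corresponding to the $2m-1$ odd half-twists, and identifying which residue class yields which sign in (\ref{epsilondef}) and (\ref{gammadef}) requires careful tracking. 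The exclusions $(2m+1)\nmid i$ and $j \not\equiv m \pmod{2m+1}$ in the cross-product should correspond precisely to those positions where the corresponding factor in Takata's expansion simplifies trivially and therefore contributes no $n_i n_j$ term.

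I would model the argument on the even-twist case treated in \cite{loCJP}, adapting each manipulation to accommodate the parity change; the structural resemblance of the two results suggests that most steps carry over verbatim, with the new $\epsilon_{i,j,m}$ and $\gamma_{i,m}$ functions encoding exactly the modifications needed. Once the multi-sum has been brought to the claimed form, verifying termwise agreement with (\ref{JNK-m-p}) completes the proof.
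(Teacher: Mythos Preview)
Your high-level plan---apply Takata's theorem and then match exponents, as in \cite{loCJP}---is right, but the mechanism you describe is not how the argument goes, and following your description literally would lead you into the weeds. Takata's formula (Theorem \ref{takata}) is \emph{already} a $p'$-fold nested sum over a staircase $N-1\ge n_{p'}\ge\cdots\ge n_1\ge 0$, with the factors $\prod_j 1/(q)_{n_j-n_{j-1}}$ in $X(\underline n)$ producing the $q$-binomials directly once combined with $(q)_{n_{p'}}$. There is no ``conversion into ladder form,'' no auxiliary summation variables are introduced, and no Walsh-type $q$-binomial identities are used here (those appear in the paper only in the proof of Theorem~\ref{main3}). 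The number of indices, $(2m+1)p-1$, is simply $p'=(l-1)/2$ for the 2-bridge parameters $l=4mp+2p-1$, $t=4mp-1$ attached to $K_{(-m,-p)}=K_{(m+1,p)}^*$.

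What actually has to be done is to evaluate the exponents $a(\underline n)$, $b_1(\underline n)+b_2(\underline n)$ and the product $X(\underline n)$ from Takata's definitions using the explicit values of $\sigma_j$, $i_k$, $\sigma_{i_k}$ supplied by Lemmas~\ref{l10} and~\ref{l11}. The $N$-linear part $a(\underline n)$ collapses to $\sum n_{(2m+1)j}+\sum n_{(2m+1)j-(m+1)}+p-1$, giving the prefactor $q^{(p-1)(N-1)}$ and the $q^{Nn_i}$ terms at $i\equiv m,\,0\pmod{2m+1}$; the signs and binomial powers in those same residue classes come from $\tau(j)$. The heart of the proof is then a purely algebraic verification that the quadratic part of $b_1(\underline n)+b_2(\underline n)$ (a sum of several multi-indexed sums over $j,j',k,k',s$) equals $\sum \epsilon_{i,j,m}n_in_j-\sum n_in_{i+1}$. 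This is done by telescoping the $k'$- and $k$-sums, cancelling matching pieces across lines, and finally identifying each surviving triple sum with one of the congruence classes $(i,j)\equiv(i,\pm i),(i,\pm i\mp 1)\pmod{2m+1}$ in the definition of $\epsilon_{i,j,m}$. Your proposal is vague on exactly this step, and it is the only nontrivial one; the ``periodicity modulo $2m+1$'' you mention is not something that \emph{arises} from a transformation but is hard-wired into Takata's $\sigma_j$ and $i_k$ via Lemma~\ref{l10}.
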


\noindent For an example of Theorem \ref{main1}, take $m=3$ and $p=1$. We then have

\begin{align*}
J_N&(K_{(-3,-1)};q) 
= \sum_{N-1 \geq n_6 \geq n_5 \geq n_4 \geq n_3 \geq n_2 \geq n_1 \geq 0}  (q^{1-N})_{n_6} (-1)^{n_3 + n_6} q^{Nn_3 + \binom{n_3 + 1}{2} - \binom{n_6+1}{2}} \\
& \qquad \qquad \qquad \qquad \qquad \qquad \times q^{n_1(n_5 + n_6) + n_2(n_4 + n_5) - n_1n_2 - n_2n_3 - n_4n_5 - n_5n_6} \\
&\qquad \qquad \qquad \qquad  \qquad \qquad \qquad \times q^{n_1 + n_2 - n_4 - n_5} \begin{bmatrix} n_6 \\ n_5 \end{bmatrix} \begin{bmatrix} n_5 \\ n_4 \end{bmatrix} \begin{bmatrix} n_4 \\ n_3 \end{bmatrix} \begin{bmatrix} n_3 \\ n_2 \end{bmatrix} \begin{bmatrix} n_2 \\ n_1 \end{bmatrix}.
 \end{align*}

For the case of $K_{(-m,p)}$, define the functions $\Delta_{i,j,m}$ and $\beta_{i,m}$ by
\begin{equation} \label{Deltadef}
\Delta_{i,j,m} = 
\begin{cases}
1, &\text{if $j \equiv -i$ or $-i+1$ $\pmod{2m+1}$}, \\
-1, &\text{if $j \equiv i$ or $i+1$ $\pmod{2m+1}$}, \\
0, &\text{otherwise}
\end{cases}
\end{equation}
where $1 \leq i < j \leq (2m+1)p$ with $(2m+1) \nmid i$ and $j \not \equiv m+1 \pmod{2m+1}$ and
\begin{equation} \label{betadef}
\beta_{i,m} =
\begin{cases}
1, &\text{if $i \equiv 1,\dots, m \pmod{2m+1}$}, \\
-1 &\text{if $i \equiv m+1,\dots, 2m \pmod{2m+1}$}, \\
0, &\text{if $i \equiv 0 \pmod{2m+1}$} 
\end{cases}
\end{equation}
where $1 \leq i \leq (2m+1)p-1$. For convenience, we define $\beta_{i,0}=0$ for $1 \leq i \leq p-1$. Our second main result is the following.

\begin{theorem} \label{main2}
For a nonnegative integer $m$ and positive integer $p$, we have
\begin{align} \label{JNK-mp}
J_{N}&(K_{(-m,p)}; q) \nonumber \\ 
&= q^{p(1-N)} \sum_{N-1 \geq n_{(2m+1)p} \geq \cdots \geq n_1 \geq 0} (q^{1-N})_{n_{(2m+1)p}}(-1)^{n_{(2m+1)p}} q^{-\binom{n_{(2m+1)p} + 1}{2}} \nonumber \\
&  \qquad \qquad \qquad \qquad \times \prod_{\substack{1 \leq i < j \leq (2m+1)p \\ (2m+1) \nmid i \\ j \not \equiv m+1 \pmod*{2m+1}}} q^{\Delta_{i,j,m} n_i n_j} \prod_{\substack{i=1 \\ i \equiv m+1,\, 2m+1 \pmod*{2m+1}}}^{(2m+1)p-1} (-1)^{n_{i}} q^{-N n_{i} + \binom{n_{i} + 1}{2}} \nonumber \\
& \qquad \qquad \qquad \qquad \times \prod_{i=1}^{(2m+1)p - 1} q^{\beta_{i,m} n_i}\begin{bmatrix} n_{i+1} \\ n_{i}  \end{bmatrix}.  
\end{align}
\end{theorem}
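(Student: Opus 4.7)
The plan is to mirror the proof of Theorem~\ref{main1} (which itself starts from Takata's multi-sum expression for the colored Jones polynomial of double twist knots), making the adjustments forced by flipping the sign of $p$. The starting point is Takata's formula specialized to $K_{(-m,p)}$: this produces an iterated sum indexed essentially by $(2m+1)p$ nested variables (one more than in the $K_{(-m,-p)}$ case because the positive $p$-twist region contributes an extra summation slot) together with a prefactor $q^{p(1-N)}$ arising from the framing/writhe correction of the positive twists.

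Next I would manipulate this multi-sum in the same way as in \cite{loCJP} and in the proof of Theorem~\ref{main1}: re-index so that the outermost variable is $n_{(2m+1)p}$ and the $q$-binomials $\begin{bmatrix} n_{i+1} \\ n_i \end{bmatrix}$ appear for $1 \le i \le (2m+1)p-1$, then collect the quadratic and linear exponents of $q$ according to how often each pair $n_i n_j$ (and each $n_i$) shows up. The key observation is that the cross-term exponents come from telescoping a geometric sum coming from Takata's formula, and their signs depend on the residues of $i$ and $j$ modulo $2m+1$; this is precisely how the function $\Delta_{i,j,m}$ is defined. The shift from $\epsilon_{i,j,m}$ (involving $-i$ or $-i-1$ and $i$ or $i-1$) to $\Delta_{i,j,m}$ (involving $-i$ or $-i+1$ and $i$ or $i+1$) reflects the fact that for positive $p$ the relevant parameter in Takata's formula shifts by one. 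Similarly, the linear exponents collapse into $\beta_{i,m}n_i$, with the three cases of $\beta_{i,m}$ corresponding to the three residue classes $1,\dots,m$; $m+1,\dots,2m$; and $0 \pmod{2m+1}$.

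Two sign-related points need care and will constitute the main bookkeeping burden. First, the diagonal contributions that in Theorem~\ref{main1} produced $(-1)^{n_i} q^{Nn_i + \binom{n_i+1}{2}}$ should here become $(-1)^{n_i} q^{-Nn_i + \binom{n_i+1}{2}}$, with the sign of $Nn_i$ flipped; this is the direct image under $p \mapsto -p$ in Takata's formula and should fall out after tracking the $q^{\pm N}$ powers through the sum. Second, the outermost factor $(-1)^{n_{(2m+1)p}} q^{-\binom{n_{(2m+1)p}+1}{2}}$ arises after applying the standard identity $(q^{1-N})_n = (-1)^n q^{-\binom{n+1}{2}+Nn}(q^{-N+n};q)_{?}$ style manipulation used to rewrite the Habiro-style tail of Takata's sum.

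The hard part will be the combinatorial bookkeeping: verifying that after collecting all quadratic exponents $n_in_j$ the coefficient is exactly $\Delta_{i,j,m}$ on the nose (with the correct restriction $(2m+1)\nmid i$ and $j \not\equiv m+1 \pmod{2m+1}$), and that the linear exponents assemble into $\beta_{i,m}n_i$. Once the small example $m=0$, $p$ arbitrary (where $K_{(0,p)}$ is a torus knot and the answer is known) and the example parallel to the $m=3,\,p=1$ check after Theorem~\ref{main1} are verified, this should confirm the residue-class analysis and the full identity~\eqref{JNK-mp} will follow.
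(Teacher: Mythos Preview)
Your plan is essentially the paper's own approach: apply Takata's formula to $K_{(-m,p)}$ (via the $2$-bridge presentation $\mathfrak{b}(4mp+2p+1,4mp+1)$), compute $a(\underline{n})$, $b_1(\underline{n})+b_2(\underline{n})$, and $X(\underline{n})$ from Lemmas~\ref{l12}--\ref{l13}, and then verify by telescoping and reindexing that the collected quadratic exponents give exactly $\Delta_{i,j,m}$ and the linear ones give $\beta_{i,m}$. So the strategy is right.

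Two points of caution. First, your framing of the changes as ``the direct image under $p\mapsto -p$'' is misleading and could derail the computation: Takata's theorem is applied to a \emph{different} $2$-bridge knot with different parameters $(l,t)=(4mp+2p+1,4mp+1)$, not to the old one with $p$ negated. The sign change in the $Nn_i$ exponents, the shift from $\epsilon$ to $\Delta$, and the appearance of $\beta$ rather than $\gamma$ all come from the fact that $\sigma_j$, $i_k$, $\sigma_{i_k}$ (and hence $a(\underline{n})$, $b_1+b_2$, $X$) are computed anew from Lemmas~\ref{l12}--\ref{l13}; in particular $a(\underline{n})$ now carries an overall minus sign (equation~\eqref{a2}), which is what flips $q^{Nn_i}$ to $q^{-Nn_i}$. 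Likewise the prefactor $q^{p(1-N)}$ is not a separate framing correction but the constant term $-p$ of $a(\underline{n})$. Second, checking the cases $m=0$ and one further example does not ``confirm'' the residue-class analysis in the sense of proving it; the paper carries out the full algebraic reduction (the chain \eqref{newlhs2}--\eqref{newlhs5}) for general $m\geq 1$, with $m=0$ handled separately by citing Hikami. You will need to do the same.
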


The case $m=0$ of Theorem \ref{main2} was proved by Hikami \cite{hikami1}. Here $K_{(0, p)}=T_{(2,2p+1)}$, the family of right-handed torus knots. Thus, one recovers $J_{N}(T_{(2,2p+1)}; q)$ by taking $m=0$ in (\ref{JNK-mp}). To see this, we first rewrite (\ref{JNK-mp}) as

\begin{align} \label{JNK-mp-whatif}
J_{N}&(K_{(-m,p)}; q) \nonumber \\ 
&= q^{p(1-N)} \sum_{N-1 \geq n_{(2m+1)p} \geq \cdots \geq n_1 \geq 0} (q^{1-N})_{n_{(2m+1)p}}q^{Nn_{(2m+1)p}} q^{-2\binom{n_{(2m+1)p} + 1}{2}} \nonumber \\
&  \qquad \qquad \qquad \qquad \times \prod_{\substack{1 \leq i < j \leq (2m+1)p \\ (2m+1) \nmid i \\ j \not \equiv m+1 \pmod*{2m+1}}} q^{\Delta_{i,j,m} n_i n_j} \prod_{\substack{i=1 \\ i \equiv m+1 \pmod*{2m+1}}}^{(2m+1)p} (-1)^{n_{i}} q^{-N n_{i} + \binom{n_{i} + 1}{2}} \nonumber \\
& \qquad \qquad \qquad \qquad \times \prod_{\substack{i=1 \\ i \equiv 2m+1 \pmod*{2m+1}}}^{(2m+1)p} (-1)^{n_{i}} q^{-N n_{i} + \binom{n_{i} + 1}{2}}  \prod_{i=1}^{(2m+1)p - 1} q^{\beta_{i,m} n_i}\begin{bmatrix} n_{i+1} \\ n_{i} \end{bmatrix}.
\end{align}

\noindent For $m=0$, the first product in (\ref{JNK-mp-whatif}) is empty while the second and third products in (\ref{JNK-mp-whatif}) are equal.   Taking $\beta_{i,0} = 0$ in \eqref{betadef}, we have (cf. Proposition 9 in \cite{hikami1})

\begin{equation}  \label{t22p+1}
J_{N}(T_{(2,2p+1)}; q) = q^{p(1-N)} \sum_{N-1 \geq n_{p} \geq \cdots \geq n_1 \geq 0} (q^{1-N})_{n_{p}}  q^{-Nn_{p}} \prod_{i=1}^{p-1} q^{n_i(n_i + 1 -2N)} \begin{bmatrix} n_{i+1} \\ n_{i} \end{bmatrix}. 
\end{equation}

\noindent For another example of Theorem \ref{main2}, consider $m=p=2$. We then have

\begin{align*}
J_N&(K_{(-2,2)};q) \\
&= q^{2(1-N)} \sum_{N-1 \geq n_{10} \geq n_9 \geq n_8 \geq n_7 \geq n_6 \geq n_5 \geq n_4 \geq n_3 \geq n_2 \geq n_1 \geq 0}  (q^{1-N})_{n_{10}} (-1)^{n_3 + n_5 + n_8 + n_{10}} q^{-N(n_3 + n_5+ n_8)} \\
&\qquad \qquad \quad \times q^{-\binom{n_{10}+1}{2} + \binom{n_8+1}{2} + \binom{n_5+1}{2} + \binom{n_3+1}{2} +n_1(-n_2 + n_4 + n_5 - n_6 - n_7 + n_9 + n_{10}) + n_2(n_4 - n_7 + n_9)} \\
& \qquad \qquad \quad \hspace{.05in} \times q^{n_3(-n_4 + n_7 - n_9) + n_4(-n_5 + n_6 + n_7 - n_9 - n_{10}) + n_6(-n_7 + n_9 + n_{10}) + n_7n_9 - n_8n_9 - n_9 n_{10}} \\ 
& \qquad \qquad \quad \hspace{.1in} \times q^{n_1 + n_2 - n_3 - n_4 + n_6 + n_7 - n_8 - n_9}\begin{bmatrix} n_{10} \\ n_9 \end{bmatrix} \begin{bmatrix} n_9 \\ n_8 \end{bmatrix} \begin{bmatrix} n_8 \\ n_7 \end{bmatrix} \begin{bmatrix} n_7 \\ n_6\end{bmatrix} \begin{bmatrix} n_6 \\ n_5 \end{bmatrix}  \begin{bmatrix} n_5 \\ n_4 \end{bmatrix} \begin{bmatrix} n_4 \\ n_3 \end{bmatrix} \begin{bmatrix} n_3 \\ n_2 \end{bmatrix} \begin{bmatrix} n_2 \\ n_1 \end{bmatrix}.
\end{align*}

Recall that
\begin{equation} \label{mirror}
J_{N}(K; q^{-1}) = J_{N}(K^{*}; q),
\end{equation}
where $K^*$ denotes the mirror image of the knot $K$.   Thus, since $K_{(-m,-p)}$ is the mirror image of $K_{(m+1,p)}$ and $K_{(0,p-1)}$ is the mirror image of $K_{(0,-p)}$, equations \eqref{JNK-m-p} and \eqref{JNK-mp} cover all of the double twist knots in this family, up to a substitution of $q$ by $q^{-1}$. Combined with Theorems 1.1 and 1.2 in \cite{loCJP}, we have $q$-hypergeometric series expressions of this type for all double twist knots.

Another type of $q$-hypergeometric formula for the colored Jones polynomial can be deduced from formulas of Walsh \cite{walsh} together with the theory of Bailey pairs. These formulas are our third main result.

\begin{theorem} \label{main3}
For positive integers $m$ and $p$, we have
\begin{align} \label{cyclotomicmp}
J_N&(K_{(m,p)}; q)  \nonumber \\
&= q^{p(1-N^2)}\sum_{\substack{n \geq 0 \\ n =n_m \geq n_{m-1} \geq \cdots \geq n_1 \geq 0 \\ n = s_p \geq s_{p-1} \geq \cdots \geq s_1 \geq 0}}
\frac{(q^{1+N})_{n} (q^{1-N})_n}{(q)_{n_1}} q^n \prod_{i=1}^{m-1} q^{n_i^2+n_i} \begin{bmatrix} n_{i+1} \\ n_i \end{bmatrix} \prod_{j=1}^{p-1} q^{s_j^2+s_j} \begin{bmatrix} s_{j+1} \\ s_j \end{bmatrix}
\end{align}
and 

\begin{align}  \label{cyclotomicm-p}
J_N&(K_{(m,-p)}; q)  \nonumber \\
&= q^{-p(1-N^2)}\sum_{\substack{n \geq 0 \\ n = n_m \geq n_{m-1} \geq \cdots \geq n_1 \geq 0 \\ n = s_p \geq s_{p-1} \geq \cdots \geq s_1 \geq 0}} 
\frac{(q^{1+N})_{n} (q^{1-N})_n}{(q)_{n_1}} (-1)^nq^{-\binom{n+1}{2}} \nonumber \\
& \qquad \qquad \qquad \qquad \qquad \qquad \qquad \qquad \times \prod_{i=1}^{m-1} q^{n_i^2+n_i} \begin{bmatrix} n_{i+1} \\ n_i \end{bmatrix} \prod_{j=1}^{p-1} q^{-s_j - s_{j+1}s_j} \begin{bmatrix} s_{j+1} \\ s_j \end{bmatrix}.
\end{align}
\end{theorem}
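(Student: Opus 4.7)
The plan is to combine Walsh's cyclotomic (Habiro-type) expansions of $J_N(K_{(m,\pm p)};q)$ from \cite{walsh} with iterations of suitable Bailey pairs. Walsh's formulas express the colored Jones polynomial of a double twist knot as a single sum of the shape
\begin{equation*}
J_N(K_{(m,\pm p)};q) = q^{e_{m,p,N}^{\pm}}\sum_{n \geq 0}(q^{1+N})_n(q^{1-N})_n \, \Phi^{\pm}_m(n)\, \Phi^{\pm}_p(n),
\end{equation*}
where the building block $\Phi^{\pm}_r(n)$ is a closed-form expression depending only on the single twist parameter $r$. The first task is to extract the precise form of $\Phi_r^{\pm}(n)$ and of the prefactor $e_{m,p,N}^{\pm}$ from Walsh's work.

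For the all-positive case $K_{(m,p)}$, I would recognize $\Phi^+_r(n)$, up to a normalization by a power of $(q)_n$ and the overall $q^n$, as the $\beta$-side of an iterate of the classical Bailey chain of Andrews at $a=q$. Applying the recursion
\begin{equation*}
\beta_n \;\longmapsto\; \sum_{k=0}^{n} \frac{q^{k^2+k}}{(q)_{n-k}}\,\beta_k
\end{equation*}
to the trivial seed $\beta^{(0)}_n = \delta_{n,0}$ a total of $r$ times and rewriting each $1/(q)_{k_{i+1}-k_i}$ as $((q)_{k_i}/(q)_{k_{i+1}})\begin{bmatrix} k_{i+1}\\ k_i\end{bmatrix}$, the intermediate $(q)_{k_i}$ factors telescope and yield a nested sum of the form $\prod_{i=1}^{r-1}q^{k_i^2+k_i}\begin{bmatrix} k_{i+1}\\ k_i\end{bmatrix}$ over $0 \le k_1 \le \cdots \le k_r = n$. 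Substituting this expansion into both $\Phi^+_m(n)$ (indexed by the $n_i$) and $\Phi^+_p(n)$ (indexed by the $s_j$) in Walsh's single-sum formula, and then combining the surviving $(q)_n$ normalizations into the single $1/(q)_{n_1}$ in the summand, gives \eqref{cyclotomicmp}.

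For $K_{(m,-p)}$, the ``$-p$'' region contributes an overall $(-1)^n q^{-\binom{n+1}{2}}$ and replaces the per-step weight $q^{s_j^2+s_j}$ by $q^{-s_j-s_{j+1}s_j}$ in the $s_j$-product. This should be obtained by iterating a different Bailey pair, for instance one coming from a $q \mapsto q^{-1}$ substitution in the classical chain or from a Bailey lattice step, so chosen that its iterate matches Walsh's building block $\Phi^-_p(n)$. The main obstacle I anticipate is pinning down this alternate seed and iteration rule so that both the new per-step weight and the global $(-1)^n q^{-\binom{n+1}{2}}$ emerge correctly, while the $n_i$-side retains its positive structure from the first twist region. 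Once this matching is in hand, the bookkeeping of the prefactor $q^{-p(1-N^2)}$ and of the denominator $(q)_{n_1}$ in \eqref{cyclotomicm-p} parallels the positive case.
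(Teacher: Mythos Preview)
Your overall strategy---Walsh's formula plus Bailey-chain iteration at $a=q$ with $\rho_1,\rho_2\to\infty$---is exactly the paper's. But there is a real gap in the positive case: the two building blocks in Walsh's formula are \emph{not} the same function, so you cannot substitute the same iterate for both. After simplification the paper obtains
\[
J_N(K_{(m,p)};q)=q^{p(1-N^2)}\sum_{n\ge 0}q^n(q^{1+N})_n(q^{1-N})_n\,c_{p,n}(q)\,d_{m,n}(q),
\]
with
\[
c_{p,n}(q)=(q)_n\sum_{k=0}^n\frac{(-1)^kq^{\binom{k}{2}+p(k^2+k)}(1-q^{2k+1})}{(q)_{n-k}(q)_{n+k+1}},\qquad
d_{m,n}(q)=(q)_n\sum_{k=0}^n\frac{q^{mk^2+(m-1)k}(1-q^{2k+1})}{(q)_{n-k}(q)_{n+k+1}}.
\]
Your iteration from the unit pair $\beta_n=\delta_{n,0}$ (equivalently from Slater's B(3) seed $\beta_n=1/(q)_n$) produces $c_{p,n}$, but $d_{m,n}$ is the $\beta$-side of a \emph{different} pair, namely $\beta_n=1/(q)_n^2$ with $\alpha_n=q^{n^2}(1-q^{2n+1})/(1-q)$ due to Warnaar. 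It is this second seed whose extra $1/(q)_n$ propagates through the chain to become the $1/(q)_{n_1}$ in \eqref{cyclotomicmp}; it does not arise from ``combining surviving $(q)_n$ normalizations'' of two identical iterates. The asymmetry is already visible in Walsh's raw formula, where the $m$-piece carries $\mu_{2k}^{(2m-1)/2}$ while the $p$-piece carries $\mu_{2k}^{p}$.

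For $K_{(m,-p)}$ the paper's route is simpler than the alternate Bailey chain you anticipate. The factor $d_{m,n}$ is unchanged, and for the $p$-factor one checks directly from the closed form above that $c_{p,n}(1/q)=(-1)^nq^{n(n+3)/2}c_{-p,n}(q)$. Sending $q\mapsto 1/q$ in the nested sum already obtained for $c_{p,n}$, using $\begin{bmatrix}n\\k\end{bmatrix}_{1/q}=q^{k^2-nk}\begin{bmatrix}n\\k\end{bmatrix}_q$, immediately yields $\prod_j q^{-s_j-s_{j+1}s_j}\begin{bmatrix}s_{j+1}\\s_j\end{bmatrix}$, and the leftover prefactor $q^n\cdot(-1)^nq^{-n(n+3)/2}=(-1)^nq^{-\binom{n+1}{2}}$ supplies the global sign and power of $q$. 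No new Bailey seed or lattice step is required.
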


In view of \eqref{JNK-mp} and \eqref{cyclotomicm-p}, we define the $q$-series $\mathbb{F}_{m,p}(q)$ for $m \geq 0$ and $p \geq 1$ and $\mathbb{U}_{m,p}(x;q)$ for $m,p \geq 1$ by

\begin{align} 
\mathbb{F}_{m,p}&(q) \nonumber \\
& = q^p \sum_{n_{(2m+1)p} \geq \cdots \geq n_1 \geq 0} (q)_{n_{(2m+1)p}} (-1)^{n_{(2m+1)p}} q^{-\binom{n_{(2m+1)p} + 1}{2}} \prod_{\substack{1 \leq i < j \leq (2m+1)p \\ (2m+1) \nmid i \\ j \not \equiv m+1 \pmod*{2m+1}}} q^{\Delta_{i,j,m} n_i n_j} \nonumber \\
& \qquad \qquad \qquad \qquad \times \prod_{\substack{i=1 \\ i \equiv m+1,\, 2m+1 \pmod*{2m+1}}}^{(2m+1)p-1} (-1)^{n_{i}} q^{\binom{n_{i} + 1}{2}} \prod_{i=1}^{(2m+1)p - 1} q^{\beta_{i,m} n_i}\begin{bmatrix} n_{i+1} \\ n_{i}  \end{bmatrix} 
\end{align}
and
\begin{align}
\mathbb{U}_{m,p}(x;q) \nonumber \\
& = q^{-p} \sum_{\substack{n \geq 0 \\ n = n_m \geq n_{m-1} \geq \cdots \geq n_1 \geq 0 \\ n = s_p \geq s_{p-1} \geq \cdots \geq s_1 \geq 0}} 
\frac{(-xq)_n (-x^{-1} q)_n}{(q)_{n_1}} (-1)^nq^{-\binom{n+1}{2}} \nonumber \\
& \qquad \qquad \qquad \qquad \qquad \qquad \qquad \qquad \times \prod_{i=1}^{m-1} q^{n_i^2+n_i} \begin{bmatrix} n_{i+1} \\ n_i \end{bmatrix} \prod_{j=1}^{p-1} q^{-s_j - s_{j+1}s_j} \begin{bmatrix} s_{j+1} \\ s_j \end{bmatrix}.
\end{align}
Note that neither $\mathbb{F}_{m,p}(q)$ nor $\mathbb{U}_{m,p}(x;q)$ is defined anywhere except at roots of unity. In this case, we have
\begin{equation} \label{Fatroot}
\mathbb{F}_{m,p}(\zeta_N) = J_N(K_{(-m,p)};\zeta_N)
\end{equation}
and 
\begin{equation} \label{Uatroot}
\mathbb{U}_{m,p}(-1; \zeta_N) = J_N(K_{(m,-p)};\zeta_N) 
\end{equation}  
for any $N$th root of unity $\zeta_N$. By (\ref{mirror}), (\ref{Fatroot}) and (\ref{Uatroot}) and since the mirror image of $K_{(-m,p)}$ is $K_{(m+1,-p)}$, we immediately have the following.

\begin{corollary}
If $\zeta_N$ is any root $N$th root of unity, then we have 
\begin{equation} \label{FUdual}
\mathbb{F}_{m,p}(\zeta_N) = \mathbb{U}_{m+1,p}(-1; \zeta_N^{-1}).
\end{equation}
\end{corollary}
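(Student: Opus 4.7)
The plan is to follow the author's hint and chain together the three cited ingredients, (\ref{mirror}), (\ref{Fatroot}), and (\ref{Uatroot}), via the mirror relation between $K_{(-m,p)}$ and $K_{(m+1,-p)}$. No new machinery is required; this is a bookkeeping argument at a root of unity.

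First, I would start from the left-hand side and apply (\ref{Fatroot}) directly to rewrite
\begin{equation*}
\mathbb{F}_{m,p}(\zeta_N) = J_N(K_{(-m,p)};\zeta_N).
\end{equation*}
Next, since the mirror image of $K_{(-m,p)}$ is $K_{(m+1,-p)}$ (this is exactly the mirror identification noted in the paragraph following \eqref{mirror}), I would invoke \eqref{mirror} with $K = K_{(m+1,-p)}$ and $K^* = K_{(-m,p)}$, evaluated at $q = \zeta_N^{-1}$, to obtain
\begin{equation*}
J_N(K_{(-m,p)};\zeta_N) = J_N(K_{(m+1,-p)};\zeta_N^{-1}).
\end{equation*}
Finally, I would apply (\ref{Uatroot}) with $m$ replaced by $m+1$ and with the root of unity $\zeta_N^{-1}$ (which is itself an $N$th root of unity) to identify
\begin{equation*}
J_N(K_{(m+1,-p)};\zeta_N^{-1}) = \mathbb{U}_{m+1,p}(-1;\zeta_N^{-1}),
\end{equation*}
and concatenating the three equalities yields \eqref{FUdual}.

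The only point that requires any care, and which I would double-check rather than treat as an obstacle, is the mirror identification $K_{(-m,p)}^{*} = K_{(m+1,-p)}$: one must verify that negating both twist regions simultaneously in the convention $(2m-1, 2p)$ sends the pair $(-m,p)$ (encoding $(-2m-1, 2p)$ half-twists) to the pair $(m+1,-p)$ (encoding $(2m+1,-2p)$ half-twists), which is consistent with Figure~\ref{fig:dt}. Since the paper has already used this same identification to justify that Theorems~\ref{main1} and \ref{main2} cover all double twist knots up to $q \mapsto q^{-1}$, I would simply cite that observation and the corollary follows immediately.
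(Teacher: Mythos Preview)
Your proposal is correct and is exactly the argument the paper gives: the corollary is stated as an immediate consequence of \eqref{mirror}, \eqref{Fatroot}, \eqref{Uatroot}, and the mirror identification $K_{(-m,p)}^{*}=K_{(m+1,-p)}$, with no additional steps.
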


\noindent Similar ``dualities" involving $q$-hypergeometric series at roots of unity can be found in \cite{BOPR,Co1,folsometal,hl1,loCJP}.   As the case $\mathbb{F}_{0,1}(q)$ is equal to $q$ times the Kontsevich-Zagier series \eqref{K-Zseries}, we refer to the $q$-series $\mathbb{F}_{m,p}(q)$ as the \emph{Kontsevich-Zagier series for odd double twist knots}. 

Similarly, motivated by \eqref{JNK-m-p} and \eqref{cyclotomicmp}, we define the $q$-series $\mathfrak{F}_{m,p}(q)$ and $\mathfrak{U}_{m,p}(x;q)$ for $m,p \geq 1$ by

\begin{align}
\mathfrak{F}&_{m,p}(q) \nonumber \\ 
& = q^{1-p} \sum_{n_{(2m+1)p-1} \geq \cdots \geq n_1 \geq 0} (q)_{n_{(2m+1)p-1}} (-1)^{n_{(2m+1)p-1}} q^{-\binom{n_{(2m+1)p-1} + 1}{2}} \prod_{\substack{1 \leq i < j \leq (2m+1)p-1 \\ (2m +1) \nmid i \\ j \not \equiv m \pmod*{2m+1}}} q^{\epsilon_{i,j,m}n_in_j} \nonumber \\
& \qquad \qquad \qquad \qquad \times \prod_{\substack{i=1 \\ i \equiv m,\, 2m+1 \pmod*{2m+1}}}^{(2m+1)p-2} (-1)^{n_{i}} q^{\binom{n_{i} + 1}{2}} \prod_{i=1}^{(2m+1)p - 2} q^{-n_i n_{i+1} + \gamma_{i, m} n_i}\begin{bmatrix} n_{i+1} \\ n_{i} \end{bmatrix} 
\end{align}
and
\begin{equation}
\mathfrak{U}_{m,p}(x;q)= q^{p}\sum_{\substack{n \geq 0 \\ n =n_m \geq n_{m-1} \geq \cdots \geq n_1 \geq 0 \\ n = s_p \geq s_{p-1} \geq \cdots \geq s_1 \geq 0}}
\frac{(-xq)_n (-x^{-1} q)_n}{(q)_{n_1}} q^n \prod_{i=1}^{m-1} q^{n_i^2+n_i} \begin{bmatrix} n_{i+1} \\ n_i \end{bmatrix} \prod_{j=1}^{p-1} q^{s_j^2+s_j} \begin{bmatrix} s_{j+1} \\ s_j \end{bmatrix}.
\end{equation}

\noindent Here, $\mathfrak{U}_{m,p}(x;q)$ is well-defined for $|q| < 1$ and for $q$ a root of unity when $x=-1$ while $\mathfrak{F}_{m,p}(q)$ is only defined at roots of unity. Then 
\begin{equation}
\mathfrak{F}_{m,p}(\zeta_N) = J_{N}(K_{(-m,-p)}; \zeta_N)
\end{equation}
and 
\begin{equation}
\mathfrak{U}_{m,p}(-1; \zeta_N) = J_{N}(K_{(-m,p)}; \zeta_N)
\end{equation}
for any $N$th root of unity $\zeta_N$, giving the following.

\begin{corollary}
If $\zeta_N$ is any root $N$th root of unity, then we have
\begin{equation} \label{FUdual2}
\mathfrak{F}_{m,p}(\zeta_N) = \mathfrak{U}_{m+1,p}(-1; \zeta_N^{-1}).
\end{equation}
\end{corollary}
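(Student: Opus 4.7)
The plan is to follow exactly the same three-step template used for the preceding corollary, simply rerunning the chain \textemdash{} root-of-unity evaluation, mirror-image identity, second root-of-unity evaluation \textemdash{} but with the $\mathfrak{F}/\mathfrak{U}$ pair in place of the $\mathbb{F}/\mathbb{U}$ pair. Concretely, all three ingredients I need are already recorded in the paragraph immediately preceding the corollary: the identity $\mathfrak{F}_{m,p}(\zeta_N) = J_N(K_{(-m,-p)};\zeta_N)$, the evaluation of $\mathfrak{U}_{m,p}(-1;\zeta_N)$ as a colored Jones polynomial at a root of unity, and the mirror symmetry \eqref{mirror}. The only geometric input is the observation, already made in the discussion following Theorem~\ref{main2}, that $K_{(-m,-p)}$ is the mirror image of $K_{(m+1,p)}$ (the analog, with the signs in the second region also reversed, of the paper's stated identification $K_{(-m,p)}^{\ast} = K_{(m+1,-p)}$).

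First I would rewrite the left-hand side using the root-of-unity identity for $\mathfrak{F}_{m,p}$, obtaining $\mathfrak{F}_{m,p}(\zeta_N) = J_N(K_{(-m,-p)}; \zeta_N)$. Next I would apply \eqref{mirror} to the knot $K_{(m+1,p)}$, using $K_{(m+1,p)}^{\ast} = K_{(-m,-p)}$, to turn this into $J_N(K_{(m+1,p)}; \zeta_N^{-1})$. Finally, I would invoke the root-of-unity evaluation for $\mathfrak{U}$ with the index shift $m \mapsto m+1$ and with the (also $N$th) root of unity $\zeta_N^{-1}$ in place of $\zeta_N$, to recognize the result as $\mathfrak{U}_{m+1,p}(-1; \zeta_N^{-1})$. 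Concatenating the three equalities yields \eqref{FUdual2}.

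There is no genuine obstacle here; the corollary is a formal consequence of facts already established, and the argument mirrors that of \eqref{FUdual} line for line. The only point that merits a sanity check is the bookkeeping behind the $\mathfrak{U}$-evaluation at a root of unity: one verifies that at $x=-1$ and $q=\zeta_N^{\pm 1}$ the Pochhammers $(-xq)_n$ and $(-x^{-1}q)_n$ reduce to $(q)_n$, matching $(q^{1+N})_n$ and $(q^{1-N})_n$ from \eqref{cyclotomicmp}, and that the prefactor $q^{p(1-N^2)}$ there collapses to $q^p$ in $\mathfrak{U}_{m,p}$ because $\zeta_N^{N^2}=1$. Once these specializations are in hand, the three-line chain above closes the proof.
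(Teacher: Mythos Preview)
Your proposal is correct and follows exactly the same approach as the paper: the corollary is stated there as an immediate consequence of the two root-of-unity evaluations $\mathfrak{F}_{m,p}(\zeta_N) = J_{N}(K_{(-m,-p)}; \zeta_N)$ and $\mathfrak{U}_{m,p}(-1; \zeta_N) = J_{N}(K_{(m,p)}; \zeta_N)$ together with \eqref{mirror} and the mirror identification $K_{(-m,-p)}^{\ast}=K_{(m+1,p)}$. Your sanity check on the Pochhammers and the prefactor is precisely the verification that the second evaluation holds.
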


The rest of this paper is organized as follows. In Section 2, we recall Takata's main theorem and provide some preliminaries. In Sections 3 and 4, we prove Theorems \ref{main1} and \ref{main2}.    In Section 5, we prove Theorem \ref{main3}.    In Section 6, we conclude with some remarks.

\section{Preliminaries}

We begin by recalling the setup from \cite{takata}. Let $l$ and $t$ be coprime odd integers with $l > t \geq 1$ and $p^{\prime} := \frac{l-1}{2}$. For $1 \leq j \leq p^{\prime}$, define integers $r(j)$ such that $r(j) \equiv (2j-1)t \pmod{2l}$ and $-l < r(j) < l$. We put $\sigma_j := (-1)^{\lfloor \frac{(2j-1)t}{l}\rfloor}$, $r^{\prime}(j) := \frac{\mid r(j) \mid + 1}{2}$ and $i_{r^{\prime}(j)}=j$ (and thus $i_k = j$ if and only if $r^{\prime}(j)=k$). For an integer $i$, sgn($i$) denotes the sign of $i$. Let $\underline{n} = (n_1, \dotsc, n_{p^{\prime}})$ and $n_s=0$ for $s \leq 0$. Finally, define

\begin{equation} \label{kappa}
\kappa(p^{\prime})= \left\{
\begin{array}{ll}
-Nn_{p^{\prime}} & \text{if $\sigma_{p^{\prime}}=-1$,} \\
0 & \text{if $\sigma_{p^{\prime}}=1$} \\
\end{array} \right. \\
\end{equation}

\noindent and

\begin{equation} \label{tau}
\tau(j) = \left\{
\begin{array}{ll}
(-1)^{n_j - n_{j-1}} & \text{if $\sigma_j=-1$,} \\
q^{\binom{n_j - n_{j-1} + 1}{2}} & \text{if $\sigma_{j}=1$.} \\
\end{array} \right. \\
\end{equation}

Consider the family of 2-bridge knots $\mathfrak{b}(l,t)$ (see \cite{bz} or \cite{mpvl}). The main result in \cite{takata} is an explicit formula for the colored Jones polynomial of $\mathfrak{b}(l,t)^{*}$.

\begin{theorem} \label{takata} We have
\begin{equation} \label{spt}
J_N(\mathfrak{b}(l,t)^{*}; q) = \sum_{N-1 \geq n_{p^{\prime}} \geq \dotsc \geq n_1 \geq 0} q^{a({\underline{n}})N + b_1(\underline{n}) + b_2(\underline{n})} X(\underline{n})
\end{equation}
where\footnote{Note that there is a misprint in the definition of $X(\underline{n})$ in \cite{takata}.   Each $\overline{q}$ in the prefactor should be $q$.}

\begin{align*}
a({\underline{n}}) & = -\frac{1}{2} \sum_{j=1}^{p^{\prime}} \Biggl( \sum_{k=r^{\prime}(j)}^{p^{\prime}} (\sigma_{i_k} + \sigma_{i_{p^{\prime} + 1 -k}})\Biggr) (n_j - n_{j-1}) - \frac{1}{2} \sum_{j=1}^{p^{\prime} - 1} (\sigma_{j+1} + \sigma_{p^{\prime} + 1 - j}) n_j  \\
& - \frac{1}{2} (\sigma_{p^{\prime}} + 1) n_{p^{\prime}} - \sum_{j=1}^{p^{\prime}} \sigma_j, \\
b_1(\underline{n}) & = - a({\underline{n}}) + \sum_{k=1}^{\frac{l-t}{2}} \frac{1-\sigma_{i_k}}{2} n_{i_k - 1} - \sum_{k=\frac{l-t}{2}+1}^{p^{\prime}} n_{i_k - 1} + \sum_{k=\frac{l-t}{2}+1}^{p^{\prime}} \frac{1+\sigma_{i_k}}{2} n_{i_k} - (1+\sigma_{p^{\prime}}) n_{p^{\prime}}  \\
& + \frac{1}{2} \sum_{j=1}^{p^{\prime} - 1} (\sigma_{j+1} - \sigma_j) n_j  \\
& - \frac{1}{2} \sum_{k=1}^{p^{\prime} - 1} \sum_{k^{\prime} = k+1}^{p^{\prime}} \frac{1 + \sgn(i_k - i_{k^{\prime}})}{2}(\sigma_{i_k} - \sigma_{i_{k^{\prime}}})(n_{i_k} - n_{i_k - 1})(n_{i_{k^{\prime}}} - n_{i_{k^{\prime}} - 1})  \\
& + \sum_{j=1}^{p^{\prime}} \sigma_j \Bigl( \sum_{k=1}^{r^{\prime}(j)} (n_{i_k} - n_{i_k - 1}) \Bigr) n_{j-1},  \\
b_2(\underline{n}) & = \left\{
\begin{array}{ll}
\displaystyle \sum_{k=\frac{l-t}{2} + 1}^{\frac{t-1}{2}} \frac{1+ \sigma_{i_k}}{2} n_{i_k - 1} & \text{if $l < 2t$,} \\
\displaystyle - \sum_{k=\frac{t+1}{2} + 1}^{\frac{l-t}{2}} \frac{1+ \sigma_{i_k}}{2} n_{i_k - 1} & \text{if $l > 2t,$} \\
\end{array} \right.  \\
X(\underline{n}) & = (-1)^{n_{p^{\prime}}} q^{\kappa(p^{\prime})} \frac{(q)_{N-1} (q)_{n_{p^{\prime}}}}{(q)_{N-n_{p^{\prime}} - 1}} \prod_{j=1}^{p^{\prime}} \frac{\tau(j)}{(q)_{n_j - n_{j-1}}}. 
\end{align*}
\end{theorem}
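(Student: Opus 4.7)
The plan is to derive the formula from a state-sum evaluation of the colored Jones polynomial applied to an explicit diagram of $\mathfrak{b}(l,t)^*$. First I would present $\mathfrak{b}(l,t)$ in its Schubert normal form as a 4-plat with $p'=(l-1)/2$ crossings, indexed by $j=1,\dots,p'$. The integer $r(j)$, lying in $(-l,l)$ and congruent to $(2j-1)t \pmod{2l}$, locates the $j$-th crossing horizontally, while the sign $\sigma_j=(-1)^{\lfloor(2j-1)t/l\rfloor}$ encodes its chirality. The derived quantities $r'(j)$ and its inverse $i_k$ repackage this data by the order in which crossings are met along the diagram, which will be needed for the state-sum bookkeeping.

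Second, I would compute $J_N(\mathfrak{b}(l,t)^*;q)$ by running the universal $R$-matrix of $U_q(\mathfrak{sl}_2)$ on the $N$-dimensional irreducible representation through the chosen diagram. Each crossing is resolved into a sum over an intermediate colour $n_j\in\{0,\dots,N-1\}$; the sign $\sigma_j$ selects $R$ or $R^{-1}$, producing the local factor $\tau(j)$ in \eqref{tau}, and the last crossing contributes the boundary factor $\kappa(p')$ in \eqref{kappa}. Summing over the resulting admissible states $0\le n_1\le\cdots\le n_{p'}\le N-1$ yields a multi-sum. The prefactor $(q)_{N-1}(q)_{n_{p'}}/(q)_{N-n_{p'}-1}$ in $X(\underline{n})$ comes from the quantum trace and the normalization of the Jones--Wenzl projectors onto irreducible summands, while the denominators $(q)_{n_j-n_{j-1}}$ arise from these projectors at each intermediate fusion step.

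Third, I would collect the remaining powers of $q$ and $-1$. The part linear in $N$ assembles into $a(\underline{n})N$ and comes from the framing/writhe correction for the mirror; the pairwise interactions between distinct crossings produce $b_1(\underline{n})$, with the factor $\tfrac{1+\sgn(i_k-i_{k'})}{2}(\sigma_{i_k}-\sigma_{i_{k'}})$ recording precisely when two crossings of opposite chirality braid past one another; and a boundary correction from the leftmost and rightmost strands yields $b_2(\underline{n})$, whose case split $l<2t$ vs.\ $l>2t$ reflects whether the initial strand enters the diagram from the left or the right.

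The hard part will be the combinatorial bookkeeping: checking that the $R$-matrix contributions reorganize exactly into the compact expressions for $a$, $b_1$, $b_2$ in the statement, and correctly identifying the case split in $b_2$. An attractive alternative is to induct on the continued-fraction expansion of $l/t$, leveraging a recursion for the colored Jones polynomial under adjoining a twist region; this trades direct diagrammatic bookkeeping for an algebraic argument but still requires careful tracking of how $\sigma_j$, $r(j)$, and $i_k$ transform under the recursion.
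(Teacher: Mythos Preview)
This theorem is not proved in the paper at all: it is quoted verbatim (with a minor typographical correction) from Takata's article \cite{takata}, and the authors use it purely as a black box to specialize to the particular 2-bridge presentations $\mathfrak{b}(4mp+2p\mp 1,4mp\mp 1)$ in Sections~3 and~4. So there is no ``paper's own proof'' against which to compare your proposal.

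That said, your outline is a plausible high-level sketch of how Takata's original argument proceeds: place $\mathfrak{b}(l,t)$ in Schubert/4-plat form, feed the $N$-dimensional $U_q(\mathfrak{sl}_2)$-module through the $R$-matrix state sum, and collect exponents. But as written it is only a plan, not a proof. The identifications you make are heuristic rather than demonstrated: you assert that the linear-in-$N$ part ``assembles into $a(\underline{n})N$'' and that pairwise crossing interactions ``produce $b_1(\underline{n})$'', but the precise match to the specific sums involving $r'(j)$, $i_k$, and $\sigma_{i_k}+\sigma_{i_{p'+1-k}}$ is exactly the nontrivial content of Takata's computation, and you have not carried it out. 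Your interpretation of the $l<2t$ versus $l>2t$ split in $b_2(\underline{n})$ as a left/right entry of the initial strand is also speculative; in Takata's derivation this dichotomy arises from which of two index ranges is nonempty after a reindexing, not from a geometric left/right choice. If you intend to supply a genuine proof, you will need to either reproduce Takata's bookkeeping in detail or make the alternative induction on the continued-fraction expansion precise (including the explicit recursion satisfied by $a,b_1,b_2,X$ under adjoining a twist). For the purposes of the present paper, however, a citation suffices.
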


Our interest will be to apply Theorem \ref{takata} to the case of the double twist knots $K_{(m+1,p)}=\mathfrak{b}(4mp + 2p -1, 4mp - 1)$ and $K_{(m+1,-p)}=\mathfrak{b}(4mp + 2p + 1, 4mp + 1)$, whose mirror images are $K_{(-m,-p)}$ and $K_{(-m,p)}$, respectively (cf. \cite{Tran}). In order to facilitate these computations, we need the following results concerning $\sigma_j$, $i_k$ and $\sigma_{i_k}$. We omit the proofs as they are straightforward generalizations of Lemmas 6--9 in \cite{takata}.

\begin{lemma} \label{l10} For $l=4mp + 2p - 1$ and $t=4mp - 1$, we have \\

\begin{enumerate}
\item[(i)] $\sigma_{j} = \left\{
\begin{array}{ll}
1 & \text{if $j \equiv 1, 2, \dotsc, m \pmod{2m+1}$,} \\
-1 & \text{if $j \equiv 0, m+1, \dotsc, 2m \pmod{2m+1}.$} \\
\end{array} \right.$ \\

\item[(ii)] To compute $i_k$, apply the following algorithm. Divide the integers from $1$ to $p'$ into $2m$ intervals, each of length $p$, and a final interval of length $p -1$. The value of $i_k$ is $(2m+1)(k-1) + m  + 1$ in the first interval and $(2m+1)(2p - k) + m$ in the second.  If $j>1$ is odd, then to obtain the value of $i_k$ in the $j$th interval, subtract $2(2m+1)p - 1$ from the formula for $i_k$ in the $(j-2)$th interval. If $j > 2$ is even, then to obtain the value of $i_k$ in the $j$th interval, add $2(2m+1)p - 1$ to the formula for $i_k$ in the $(j-2)$th interval. \\

\item[(iii)] To compute $\sigma_{i_k}$, apply the following algorithm.  Divide the integers from $1$ to $p'$ into $2m$ intervals, each of length $p$, and a final interval of length $p - 1$. The value of $\sigma_{i_k}$ alternates between $-1$ and $1$ starting with $-1$ in the first interval. 
\end{enumerate}
\end{lemma}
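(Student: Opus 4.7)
The approach is to compute $\sigma_j$, $i_k$, and $\sigma_{i_k}$ directly from the definitions, exploiting the key numerical identity $l-t=2p$ which is specific to our parameter choice. The plan is to write $j = (2m+1)q + r$ with $0 \leq r \leq 2m$ so as to organize the analysis by residue class modulo $2m+1$, the natural period appearing throughout the statement.

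For part (i), I would first rewrite $(2j-1)t = (2j-1)l - (4j-2)p$, giving
\[
\left\lfloor \frac{(2j-1)t}{l} \right\rfloor = 2j - 1 - \left\lceil \frac{(4j-2)p}{l} \right\rceil,
\]
which is valid since $\gcd(l,p) = 1$ forces $l \nmid (4j-2)p$ for $1 \leq j \leq p'$. Substituting $j = (2m+1)q + r$ and using $(2m+1)\cdot 2p = l+1$, one obtains $(4j-2)p = 2ql + 2q + (4r-2)p$, so the parity of the ceiling reduces to deciding where the remainder $2q + (4r-2)p$ falls relative to the thresholds $0$, $l$, $2l$. A direct case analysis on $r \in \{0,1,\ldots,2m\}$ then yields the three cases of $\sigma_j$.

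For parts (ii) and (iii), the key observation is that $r(j+1) \equiv r(j) + 2t \equiv r(j) - 4p \pmod{2l}$, so $r(j)$ traces an arithmetic progression with step $-4p$ modulo $2l$. Consequently $r'(j) = (|r(j)|+1)/2$ moves by $\pm 2p$ at each step inside each monotone block, changing direction precisely when $r(j)$ crosses $0$ or $\pm l$. Inverting this piecewise-linear map yields $i_k$ as a linear function of $k$ on each of the $2m+1$ intervals (of length $p$, except the last of length $p-1$), and the prescribed shift $\pm(2(2m+1)p-1) = \pm l$ between same-parity intervals records a single wrap through the fundamental domain modulo $2l$. Substituting into part (i) then gives the alternating pattern in (iii): since $l \equiv -1 \pmod{2m+1}$, each shift by $\pm l$ changes the residue of $i_k$ modulo $2m+1$ by $\mp 1$, and combining this with the alternation of the sign of the shift between successive intervals produces the desired $-1, +1, -1, \ldots$ pattern starting from $i_k \equiv m+1 \pmod{2m+1}$ in the first interval.

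The main obstacle will be the bookkeeping across all $2m+1$ intervals. Takata's Lemmas 6--9 in \cite{takata} cover only small index, so for general $m$ one must verify by induction on the interval number that the recursive rule (``subtract $l$ when $j>1$ is odd, add $l$ when $j>2$ is even") correctly tracks the signed residues $r(j)$ through their successive sign changes. The base cases (first two intervals) are obtained directly from the definition by following $r(1), r(2), \ldots$ explicitly, and the inductive step amounts to checking that the reflection of $r(j)$ across $\pm l$ is exactly balanced by the shift $\mp l$ in the formula for $i_k$. Once this is done, part (iii) follows immediately by the substitution described above.
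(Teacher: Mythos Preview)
The paper does not actually prove this lemma: it states that the proofs of Lemmas~\ref{l10}--\ref{l13} ``are straightforward generalizations of Lemmas 6--9 in \cite{takata}'' and omits them entirely. Your outline is a correct and natural way to carry out that generalization, using the identity $l-t=2p$ (equivalently $(2m+1)\cdot 2p = l+1$) and the recursion $r(j+1)\equiv r(j)-4p\pmod{2l}$ to track the signed residues through the $2m+1$ intervals; since the paper offers nothing further to compare against, your proposal stands on its own.
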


\begin{lemma} \label{l11} Let $l=4mp + 2p - 1$ and $t=4mp - 1$. Then for $1\leq k \leq p'$ and $1\leq j \leq p'-1$ we have \\

\begin{enumerate}

\item[(i)] $\sigma_{i_k} + \sigma_{i_{p^{\prime} + 1 - k}} = \left\{
\begin{array}{ll}
2 & \text{if $ip + 1 \leq k \leq (i+1)p - 1$ for $i=1, 3, \dotsc, 2m-1$,} \\
-2 & \text{if $ip + 1 \leq k \leq (i+1)p - 1$ for $i=0,2, \dotsc, 2m$},\\
0 & \text{if $k=ip$ for $i=1, 2, \dotsc, 2m.$} \\
\end{array} \right.$ \\

\item[(ii)] $\sigma_{j+1} + \sigma_{p^{\prime} + 1 - j} =  \left\{
\begin{array}{ll}
-2 & \text{if $j \equiv m \pmod{2m+1}$,} \\
0 & \text{otherwise.} \\
\end{array} \right.$ \\
\end{enumerate}
\end{lemma}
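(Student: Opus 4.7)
The plan is to establish both parts by direct case analysis, leveraging the explicit description of $\sigma_j$ and $\sigma_{i_k}$ provided by Lemma~\ref{l10}. Since $p' = (2m+1)p - 1$, we have $p' + 1 - k = (2m+1)p - k$, so the involution $k \mapsto p'+1-k$ has a clean description both in terms of the interval decomposition of Lemma~\ref{l10}(iii) and modulo $2m+1$.

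For part~(i), recall from Lemma~\ref{l10}(iii) that $\sigma_{i_k} = (-1)^r$ when $k$ lies in the $r$th interval, where intervals $1,\ldots,2m$ have length $p$ and the final interval $2m+1$ has length $p-1$. I write $k = ip + s$ with $0 \leq i \leq 2m$ and $1 \leq s \leq p$ (restricting to $s \leq p-1$ when $i = 2m$), so that $k$ lies in interval $i+1$ when $s < p$ and at the right endpoint of interval $i$ when $s = p$. Then $p'+1-k = (2m+1-i)p - s$, which puts the reflected index either strictly inside interval $2m+1-i$ (when $1 \leq i \leq 2m-1$ and $1 \leq s \leq p-1$), strictly inside interval $2m+1$ (when $i=0$), strictly inside interval $1$ (when $i=2m$), or at the right endpoint of interval $2m+1-i$ (when $s=p$, i.e.\ $k=ip$ for $1 \leq i \leq 2m$). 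In each subcase the two signs follow immediately from the alternating pattern, and the parity of $i$ distinguishes the $\pm 2$ values while the boundary case produces opposite signs summing to $0$.

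For part~(ii), I argue modulo $2m+1$: since $p'+1-j = (2m+1)p - j \equiv -j \pmod{2m+1}$, the signs $\sigma_{j+1}$ and $\sigma_{p'+1-j}$ depend only on $j+1$ and $-j$ modulo $2m+1$, respectively, through Lemma~\ref{l10}(i). A five-way case check on $j \bmod (2m+1)$, namely the residues $0$, $\{1,\ldots,m-1\}$, $m$, $m+1$, and $\{m+2,\ldots,2m\}$, shows that the two signs are opposite and the sum vanishes in every case except $j \equiv m$; there $j+1 \equiv m+1$ and $-j \equiv m+1$ both lie in the ``$-1$'' set of Lemma~\ref{l10}(i), giving the claimed value $-2$.

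The main potential obstacle is the bookkeeping of boundary behavior in part~(i): when $k = ip$ one must verify that the reflected index $(2m+1-i)p$ is again a right endpoint and that the signs on the two sides behave consistently under the reflection, so that the sum is $0$ rather than $\pm 2$. Once this boundary/interior dichotomy is set up correctly, both parts reduce to routine sign manipulations of exactly the type performed in Lemmas~6--9 of~\cite{takata}, adapted here to the parameters $l = 4mp + 2p - 1$ and $t = 4mp - 1$ via the data recorded in Lemma~\ref{l10}.
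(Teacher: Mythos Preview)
Your approach is correct and is precisely the direct case analysis the paper has in mind when it omits the proof as a ``straightforward generalization of Lemmas~6--9 in~\cite{takata}.'' One small indexing slip in your part~(i): with your parametrization $k = ip + s$, $1 \le s \le p$, the case $s = p$ gives $k = (i+1)p$, the right endpoint of interval $i+1$ (not interval $i$), and correspondingly $p'+1-k = (2m-i)p$; the conclusion that the two signs are opposite and sum to $0$ is unaffected.
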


\begin{lemma} \label{l12} For $l=4mp + 2p + 1$ and $t=4mp + 1$, we have \\

\begin{enumerate}
\item[(i)] $\sigma_{j} = \left\{
\begin{array}{ll}
1 & \text{if $j \equiv 1, 2, \dotsc, m+1 \pmod{2m+1}$,} \\
-1 & \text{if $j \equiv 0, m+2, \dotsc, 2m \pmod{2m+1}.$} \\
\end{array} \right.$ \\

\item[(ii)] To compute $i_k$, apply the following algorithm. Divide the integers from $1$ to $p'$ into $2m+1$ intervals, each of length $p$. The value of $i_k$ is $(2m+1)(k-1) + m  + 1$ in the first interval and $(2m+1)(2p - k) + m + 2$ in the second.  If $j>1$ is odd, then to obtain the value of $i_k$ in the $j$th interval, subtract $2(2m+1)p + 1$ from the formula for $i_k$ in the $(j-2)$th interval. If $j > 2$ is even, then to obtain the value of $i_k$ in the $j$th interval, add $2(2m+1)p + 1$ to the formula for $i_k$ in the $(j-2)$th interval. \\

\item[(iii)] To compute $\sigma_{i_k}$, apply the following algorithm.  Divide the integers from $1$ to $p'$ into $2m+1$ intervals, each of length $p$. The value  of $\sigma_{i_k}$ alternates between $1$ and $-1$ starting with $1$ in the first interval. 
\end{enumerate}
\end{lemma}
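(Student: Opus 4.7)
The plan is to follow the strategy of Lemmas 6--9 in \cite{takata}, reducing all three parts to a precise analysis of the sequence $r(j)$ for $1 \leq j \leq p' = (2m+1)p$. The arithmetic input is $t = l - 2p$, which gives
\[
(2j-1)t \equiv l - 2p(2j-1) \pmod{2l}
\]
since $2j-1$ is odd. Hence $r(j+1) \equiv r(j) - 4p \pmod{2l}$, so the sequence descends by $4p$ at each step and wraps as needed to stay in $(-l, l)$. A second direct computation yields $2(2m+1)t = 4ml + 2$, so $r(j + 2m+1) \equiv r(j) + 2 \pmod{2l}$; this is the engine behind the $(2m+1)$-periodicity in (i) and (iii).

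For (i), I would first deduce $\sigma_{j+2m+1} = \sigma_j$: the identity $r(j+2m+1) = r(j) + 2$ holds unless a wraparound is triggered, which (using that $r(j)$ is odd and $l$ is odd) requires $r(j) = l-2$; but this would force $2p(2j-1) \equiv -1 \pmod{l}$, and together with $2p(2m+1) \equiv -1 \pmod{l}$ this gives $j \equiv -m \pmod{l}$, impossible for $1 \leq j \leq (2m+1)p < l - m$. A direct evaluation then shows $r(j) = (4m+4-4j)p + 1$ for $1 \leq j \leq 2m+1$, which is positive for $j \leq m+1$ and negative for $m+2 \leq j \leq 2m+1$, yielding the stated values of $\sigma_j$.

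For (ii), I would solve, for each $k \in [1,p']$, the congruence $r(j) \equiv \pm(2k-1) \pmod{2l}$ with $j = i_k \in [1,p']$. Using $2p(2m+1) = l - 1 \equiv -1 \pmod{l}$, a short computation shows that $r\bigl((2m+1)(k-1) + m + 1\bigr) = 2k - 1 > 0$ for $k$ in the first $p$-interval and $r\bigl((2m+1)(2p - k) + m + 2\bigr) = -(2k-1) < 0$ for $k$ in the second, which identifies the first two intervals of $i_k$. The shift $r(j + 2m+1) = r(j) + 2$ then translates each subsequent pair of $p$-intervals by $\pm(2(2m+1)p + 1)$, producing the alternating add/subtract rule stated in the lemma. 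Part (iii) follows immediately: $\sigma_{i_k}$ equals the sign of $r(i_k)$ by (i), and the descent-by-$4p$ structure ensures that signs alternate $p$-block by $p$-block, starting with $\sigma_{i_1} = \sigma_{m+1} = 1$.

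The main obstacle is not conceptual but combinatorial: confirming the odd/even shift rule in (ii) requires careful bookkeeping for the direction of wraparounds ($\pm 2l$) as $r(j)$ crosses $\pm l$, and a parity-by-parity verification that the proposed formulas do land in $[1, p']$. Once this is carried out, the argument mirrors \cite{takata} exactly, with $t = l - 2p$ and the range $[1, (2m+1)p]$ now producing $2m+1$ full intervals of length $p$, rather than the $2m$ intervals plus a truncated interval of length $p-1$ appearing in Lemma \ref{l10}.
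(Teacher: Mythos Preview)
Your approach is exactly what the paper does: the proof is omitted there as ``a straightforward generalization of Lemmas 6--9 in \cite{takata}'', and your sketch follows precisely that template, using $t=l-2p$ to get the step $r(j+1)\equiv r(j)-4p$ and the key periodicity $r(j+2m+1)\equiv r(j)+2\pmod{2l}$. One small correction in your periodicity argument for (i): the condition $r(j)=l-2$ actually forces $2p(2j-1)\equiv 2\pmod{l}$, not $-1$, and you also need to rule out the sign change $r(j)=-1$; both follow immediately from the explicit values $r\bigl((2m+1)s+j_0\bigr)=(4m+4-4j_0)p+1+2s$ that you compute, since for $0\le s\le p-2$ these never reach $l-2$ and never equal $-1$.
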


\begin{lemma} \label{l13} Let $l=4mp + 2p + 1$ and $t=4mp + 1$. Then for $1\leq k \leq p'$ and $1\leq j \leq p'-1$ we have \\

\begin{enumerate}

\item[(i)] $\sigma_{i_k} + \sigma_{i_{p^{\prime} + 1 - k}} = \left\{
\begin{array}{ll}
2 & \text{if $ip + 1 \leq k \leq (i+1)p$ for $i=0, 2, \dotsc, 2m$,} \\
-2 & \text{if $ip + 1 \leq k \leq (i+1)p$ for $i=1,3, \dotsc, 2m-1.$}\\
\end{array} \right.$ \\

\item[(ii)] $\sigma_{j+1} + \sigma_{p^{\prime} + 1 - j} =  \left\{
\begin{array}{ll}
2 & \text{if $j \equiv 0 \pmod{2m+1}$,} \\
0 & \text{otherwise.} \\
\end{array} \right.$ \\
\end{enumerate}
\end{lemma}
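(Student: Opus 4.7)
The plan is to derive both parts of Lemma \ref{l13} directly from Lemma \ref{l12} by a case analysis on residues modulo $2m+1$. The key structural observation is that $p' = (l-1)/2 = (2m+1)p$, so the involution $k \mapsto p'+1-k$ on $\{1,\ldots,p'\}$ interacts cleanly with both the interval decomposition used in Lemma \ref{l12}(iii) and the reduction modulo $2m+1$ used in Lemma \ref{l12}(i).

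For part (i), the approach is to invoke Lemma \ref{l12}(iii), which partitions $\{1,\ldots,p'\}$ into $2m+1$ consecutive intervals of length $p$, with $\sigma_{i_k}$ alternating between $1$ and $-1$ starting with $1$ in the first interval. If $k$ lies in interval $i+1$, i.e., $ip+1 \leq k \leq (i+1)p$ for some $0 \leq i \leq 2m$, then a brief computation using $p' = (2m+1)p$ shows that $p'+1-k$ lies in interval $2m+1-i$. Hence $\sigma_{i_k} = (-1)^i$ and $\sigma_{i_{p'+1-k}} = (-1)^{2m-i} = (-1)^i$, so the two signs always agree. The sum $\sigma_{i_k} + \sigma_{i_{p'+1-k}}$ therefore equals $2$ when $i$ is even and $-2$ when $i$ is odd, matching the claimed formula.

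For part (ii), the plan is to proceed by case analysis on $a := j \bmod (2m+1)$ with $0 \leq a \leq 2m$, using Lemma \ref{l12}(i). Since $p' \equiv 0 \pmod{2m+1}$, we have $p'+1-j \equiv 1-a \pmod{2m+1}$. When $a = 0$, both $j+1 \equiv 1$ and $p'+1-j \equiv 1$ lie in $\{1,\ldots,m+1\}$, so $\sigma_{j+1} = \sigma_{p'+1-j} = 1$ and the sum is $2$. In the remaining cases $1 \leq a \leq 2m$, one checks in four subranges ($1 \leq a \leq m-1$, $a = m$, $a = m+1$, and $m+2 \leq a \leq 2m$) that $j+1$ and $p'+1-j$ fall on opposite sides of the partition $\{1,\ldots,m+1\} \sqcup \{0, m+2,\ldots,2m\}$, forcing $\sigma_{j+1}$ and $\sigma_{p'+1-j}$ to have opposite signs and the sum to vanish. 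The degenerate case $m=0$ is trivial since $\sigma_j \equiv 1$ identically.

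The main obstacle is merely the bookkeeping required at the boundaries between the $\sigma = +1$ and $\sigma = -1$ residue classes in part (ii); care is needed when $a \in \{1, m, m+1, 2m\}$ to confirm that $j+1$ and $p'+1-j$ indeed straddle the partition, especially for small values of $m$ where several boundary residues collide. Since no new ideas beyond those in the proofs of Lemmas 6--9 of \cite{takata} are required, and since the interval-level argument for (i) is essentially mechanical, the lemma follows.
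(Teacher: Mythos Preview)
Your proposal is correct and follows exactly the route the paper indicates: the paper omits the proof entirely, noting only that Lemmas \ref{l10}--\ref{l13} are ``straightforward generalizations of Lemmas 6--9 in \cite{takata},'' and your argument---reading off both parts directly from the interval decomposition in Lemma \ref{l12}(iii) and the residue description in Lemma \ref{l12}(i) via the observation $p'=(2m+1)p$---is precisely the kind of routine verification the paper has in mind. Nothing further is needed.
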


We now illustrate the computation of $a({\underline{n}})$ and $b_1({\underline{n}}) + b_2({\underline{n}})$ for $l=10p+1$ and $t=8p+1$. The routine evaluation of $X(\underline{n})$ is left to the reader. First, we take $m=2$ in Lemmas \ref{l12} and \ref{l13} to obtain

\begin{equation} \label{step0}
\sigma_{j} = \left\{
\begin{array}{ll}
1 & \text{if $j \equiv 1, 2, 3 \pmod{5}$,} \\
-1 & \text{if $j \equiv 0, 4 \pmod{5},$} \\
\end{array} \right. \\
\end{equation}

\begin{equation} \label{step1}
i_k = \left\{
\begin{array}{ll}
5(k-1) + 3 & \text{if $1 \leq k \leq p$,} \\
5(2p-k) + 4 & \text{if $p+1 \leq k \leq 2p$,} \\
5k-10p-3 & \text{if $2p+1 \leq k \leq 3p$,} \\
 20p-5k+5 & \text{if $3p+1 \leq k \leq 4p$,} \\
5k-20p-4 & \text{if $4p+1 \leq k \leq 5p$,} \\
\end{array} \right. \\
\end{equation}

\begin{equation} \label{step2}
\sigma_{i_k} = \left\{
\begin{array}{ll}
1 & \text{if $1 \leq k \leq p$,} \\
-1 & \text{if $p+1 \leq k \leq 2p$,} \\
1 & \text{if $2p+1 \leq k \leq 3p$,} \\
-1 & \text{if $3p+1 \leq k \leq 4p$,} \\
1 & \text{if $4p+1 \leq k \leq 5p$,} \\
\end{array} \right. \\
\end{equation}

\begin{equation} \label{step3}
\sigma_{i_k} + \sigma_{i_{5p+1-k}} = \left\{
\begin{array}{ll}
2 & \text{if $1 \leq k \leq p$,} \\
-2 & \text{if $p+1 \leq k \leq 2p$,} \\
2 & \text{if $2p+1 \leq k \leq 3p$,} \\
-2 & \text{if $3p+1 \leq k \leq 4p$,} \\
2 & \text{if $4p+1 \leq k \leq 5p$,} \\
\end{array} \right. \\
\end{equation}
and
\begin{equation} \label{step4}
\sigma_{j+1} + \sigma_{5p+1 - j} = \left\{
\begin{array}{ll}
2 & \text{if $j \equiv 0 \pmod{5}$,} \\
0 & \text{otherwise.}
\end{array} \right. \\
\end{equation}

\noindent Applying (\ref{step0}), (\ref{step1}), (\ref{step3}), (\ref{step4}), reindexing and after considerable simplification, we obtain that $a({\underline{n}})$ equals

\begin{align} \label{a}
& -\frac{1}{2} \sum_{j=1}^{5p} \Biggl( \sum_{k=r^{\prime}(j)}^{5p} (\sigma_{i_k} + \sigma_{i_{5p+1- k}})\Biggr) (n_j - n_{j-1}) - \sum_{j=1}^{p-1} n_{5j} - p \nonumber \\
& = -\frac{1}{2} \Biggl[ \sum_{j=1}^{p}  \Biggl( \sum_{k=4p-j+1}^{5p} (\sigma_{i_k} + \sigma_{i_{5p+1 -k}})\Biggr) (n_{5j} - n_{5j-1}) \nonumber \\
& + \sum_{j=1}^{p} \Biggl( \sum_{k=4p+j}^{5p} (\sigma_{i_k} + \sigma_{i_{5p+1 -k}})\Biggr) (n_{5j-4} - n_{5j-5}) + \sum_{j=1}^{p} \Biggl( \sum_{k=2p+j}^{5p} (\sigma_{i_k} + \sigma_{i_{5p+1 -k}})\Biggr) (n_{5j-3} - n_{5j-4}) \nonumber \\
& + \sum_{j=1}^{p} \Biggl( \sum_{k=j}^{5p} (\sigma_{i_k} + \sigma_{i_{5p+1 -k}})\Biggr) (n_{5j-2} - n_{5j-3}) + \sum_{j=1}^{p} \Biggl( \sum_{k=2p-j+1}^{5p} (\sigma_{i_k} + \sigma_{i_{5p+1 -k}})\Biggr) (n_{5j-1} - n_{5j-2}) \Biggr] \nonumber \\
& - \sum_{j=1}^{p-1} n_{5j} - p \nonumber \\
& = \sum_{j=1}^{p} n_{5j-2} - \sum_{j=1}^{p-1} n_{5j} - p.
\end{align}

\noindent By (\ref{step0}) and (\ref{step2}), the second and fifth sums in $b_1({\underline{n}})$ are zero. We then use (\ref{step0})--(\ref{step2}) and reindex to obtain

\begin{align}  \label{thirdpart}
-\sum_{k=p+1}^{5p} n_{i_{k} - 1} & = - \Biggl( \sum_{k=p+1}^{2p} n_{i_{k} - 1} + \sum_{k=2p+1}^{3p} n_{i_{k} - 1} + \sum_{k=3p+1}^{4p} n_{i_{k} - 1} + \sum_{k=4p+1}^{5p} n_{i_{k} - 1} \Biggr) \nonumber \\
& = - \Biggl( \sum_{j=1}^{p} (n_{5j-2} + n_{5j-4} + n_{5j-1} + n_{5j-5}) \Biggr),
\end{align}

\begin{equation} \label{fourthpart}
\sum_{k=p+1}^{5p} \frac{1+\sigma_{i_k}}{2} n_{i_k} = \sum_{k=2p+1}^{3p} n_{i_k} + \sum_{k=4p+1}^{5p} n_{i_k} = \sum_{j=1}^{p} (n_{5j-3} + n_{5j-4}),
\end{equation}

\begin{equation} \label{sixthpart}
\frac{1}{2} \sum_{j=1}^{5p-1} (\sigma_{j+1} - \sigma_j) n_j = \sum_{j=1}^{p-1} n_{5j} - \sum_{j=1}^{p} n_{5j-2}
\end{equation}
and
\begin{equation} \label{b2}
b_2({\underline{n}}) = \sum_{k=p+1}^{4p} \frac{1+\sigma_{i_k}}{2} n_{{i_k} - 1} = \sum_{k=2p+1}^{3p} n_{{i_k} - 1} = \sum_{j=1}^{p} n_{5j-4}. 
\end{equation}

\noindent By (\ref{a})--(\ref{b2}), the sum of $b_2({\underline{n}})$ and the first six terms in $b_1({\underline{n}})$ equals

\begin{equation} \label{sixb1b2}
p + \sum_{j=1}^{p-1} n_{5j}  + \sum_{j=1}^{p} (n_{5j-4} + n_{5j-3} - n_{5j-2} - n_{5j-1}).
\end{equation}

\noindent To compute the seventh term in $b_1({\underline{n}})$, we use (\ref{step1}) and (\ref{step2}) to observe that $k < k^{\prime}$ and $\sigma_{i_k} \neq \sigma_{i_{k^{\prime}}}$ if and only if either $1 \leq k \leq p$ and $p+1 \leq k^{\prime} \leq 2p$ or $1 \leq k \leq p$ and $3p+1 \leq k^{\prime} \leq 4p$ or $p+1 \leq k \leq 2p$ and $2p+1 \leq k^{\prime} \leq 3p$ or $p+1 \leq k \leq 2p$ and $4p+1 \leq k^{\prime} \leq 5p$ or $2p+1 \leq k \leq 3p$ and $3p+1 \leq k^{\prime} \leq 4p$ or $3p+1 \leq k \leq 4p$ and $4p+1 \leq k^{\prime} \leq 5p$. Also, $\sgn(i_k - i_{k^{\prime}}) = 1$ if and only if $i_k > i_{k^{\prime}}$ and either $i_k=5k-2$ for $1 \leq k \leq p$ and $i_{k^{\prime}} = 10p - 5k^{\prime} + 4$ for $p_1 + 1 \leq k^{\prime} \leq 2p$ or $i_k=5k-2$ for $1 \leq k \leq p$ and $i_{k^{\prime}} = 20p - 5k + 5$ for $3p+1 \leq k \leq 4p$ or $i_k = 20p - 5k + 4$ for $p+1 \leq k \leq 2p$ or $i_{k^{\prime}} = 5k^{\prime} - 10p - 3$ for $2p+1 \leq k^{\prime} \leq 3p$ or $i_k = 20p - 5k + 4$ for $p+1 \leq k \leq 2p$ and $i_{k^{\prime}} = 5k^{\prime} - 20p - 4$ for $4p+1 \leq k \leq 5p$ or $i_k = 5k-10p-3$ for $2p+1 \leq k \leq 3p$ and $i_{k^{\prime}} = 20p - 5k^{\prime} + 5$ for $3p+1 \leq k \leq 4p$ or $i_k = 20p - 5k + 5$ for $3p+1 \leq k \leq 4p$ and $i_{k^{\prime}}  = 5k^{\prime} - 20p - 4$ for $4p+1 \leq k \leq 5p$. Taking these cases into account and reindexing, we have

\begin{align} \label{sevenb1}
& - \frac{1}{2} \sum_{k=1}^{5p-1} \sum_{k^{\prime} = k+1}^{5p} \frac{1 + \sgn(i_k - i_{k^{\prime}})}{2}(\sigma_{i_k} - \sigma_{i_{k^{\prime}}})(n_{i_k} - n_{i_k - 1})(n_{i_{k^{\prime}}} - n_{i_{k^{\prime}} - 1}) \nonumber \\
& = - \sum_{k=1}^{p} \sum_{k^{\prime} = 2p-k+2}^{2p} (n_{i_k} - n_{i_k - 1})(n_{i_{k^{\prime}}} - n_{i_{k^{\prime}} - 1}) - \sum_{k=1}^{p} \sum_{k^{\prime}=4p-k+2}^{4p} (n_{i_k} - n_{i_k - 1})(n_{i_{k^{\prime}}} - n_{i_{k^{\prime}} - 1}) \nonumber \\
& + \sum_{k=p+1}^{2p} \sum_{k^{\prime} = 2p+1}^{4p-k+1} (n_{i_k} - n_{i_k - 1})(n_{i_{k^{\prime}}} - n_{i_{k^{\prime}} - 1}) 
+ \sum_{k=p+1}^{2p} \sum_{k^{\prime} = 4p+1}^{6p-k+1} (n_{i_k} - n_{i_k - 1})(n_{i_{k^{\prime}}} - n_{i_{k^{\prime}} - 1}) \nonumber \\
& - \sum_{k=2p+1}^{3p} \sum_{k^{\prime} = 6p-k+2}^{4p} (n_{i_k} - n_{i_k - 1})(n_{i_{k^{\prime}}} - n_{i_{k^{\prime}} - 1}) + \sum_{k=3p+1}^{4p} \sum_{k^{\prime} = 4p+1}^{8p-k+1} (n_{i_k} - n_{i_k - 1})(n_{i_{k^{\prime}}} - n_{i_{k^{\prime}} - 1}) \nonumber \\
& = - \sum_{j=1}^{p} \sum_{j^{\prime}=1}^{j} (n_{5j-2} - n_{5j-3})(n_{5j^{\prime} - 6} - n_{5j^{\prime} - 7}) - \sum_{j=1}^{p} \sum_{j^{\prime}=1}^{j} (n_{5j-2} - n_{5j-3})(n_{5j^{\prime} -5} - n_{5j^{\prime} - 6}) \nonumber \\
& + \sum_{j=1}^{p} \sum_{j^{\prime} = 1}^{j} (n_{5j-1} - n_{5j-2})(n_{5j^{\prime} - 3} - n_{5j^{\prime} - 4}) + \sum_{j=1}^{p} \sum_{j^{\prime}=1}^{j} (n_{5j-1} - n_{5j-2})(n_{5j^{\prime}-4} - n_{5j^{\prime} - 5}) \nonumber \\
& - \sum_{j=1}^{p} \sum_{j^{\prime} = 1}^{p} (n_{5j-3} - n_{5j-4})(n_{5j^{\prime}-5} - n_{5j^{\prime} - 6}) + \sum_{j=1}^{p} \sum_{j^{\prime} = 1}^{j} (n_{5j} - n_{5j-1})(n_{5j^{\prime}-4} - n_{5j^{\prime} - 5}).
\end{align}

\noindent Finally, using (\ref{step0}) and (\ref{step1}), then reindexing and simplifying gives the eighth term in $b_1(\underline{n})$, 

\begin{align}  \label{lastb1}
& \sum_{j=1}^{5p} \sigma_j \Bigl( \sum_{k=1}^{r^{\prime}(j)} (n_{i_k} - n_{i_k - 1}) \Bigr) n_{j-1} \nonumber \\
& = \sum_{j=1}^{p} \sigma_{5j-4} \Bigl( \sum_{k=1}^{4p+j} (n_{i_k} - n_{i_k - 1}) \Bigr) n_{5j-5} +  \sum_{j=1}^{p} \sigma_{5j-4} \Bigl( \sum_{k=1}^{2p+j} (n_{i_k} - n_{i_k - 1}) \Bigr) n_{5j-4} \nonumber \\
& +  \sum_{j=1}^{p-1} \sigma_{5j-2} \Bigl( \sum_{k=1}^{j} (n_{i_k} - n_{i_k - 1}) \Bigr) n_{5j-3} +  \sum_{j=1}^{p} \sigma_{5j} \Bigl( \sum_{k=1}^{4p-j+1} (n_{i_k} - n_{i_k - 1}) \Bigr) n_{5j-1} \nonumber \\
& + \sum_{j=1}^{p} \sigma_{5j-1} \Bigl( \sum_{k=1}^{2p-j+1} (n_{i_k} - n_{i_k - 1}) \Bigr) n_{5j-2} \nonumber \\
& = \sum_{j=1}^{p} \Biggl( \sum_{k=1}^{p} (n_{5k-2} - n_{5k-3}) + \sum_{k=p+1}^{2p} (n_{10p-5k+4} - n_{10p-5k+3}) + \sum_{k=2p+1}^{3p} (n_{5k-10p-3} - n_{5k-10p-4}) \nonumber \\
& + \sum_{k=3p+1}^{4p} (n_{20p-5k+5} - n_{20p-5k+4}) + \sum_{k=4p+1}^{4p+j} (n_{5k-20p-4} - n_{5k-20p-5}) \Biggr) n_{5j-5} \nonumber \\
& + \sum_{j=1}^{p} \Biggl( \sum_{k=1}^{p} (n_{5k-2} - n_{5k-3}) + \sum_{k=p+1}^{2p} (n_{10p-5k+4} - n_{10p-5k+3}) \nonumber \\
& + \sum_{k=2p+1}^{2p+j} (n_{5k-10p-3} - n_{5k-10p-4}) \Biggr) n_{5j-4} + \sum_{j=1}^{p} \Biggl( \sum_{k=1}^{j} (n_{5k-2} - n_{5k-3}) \Biggr) n_{5j-3} \nonumber \\
& - \sum_{j=1}^{p} \Biggl( \sum_{k=1}^{p} (n_{5k-2} - n_{5k-3}) + \sum_{k=p+1}^{2p} (n_{10p-5k+4} - n_{10p-5k+3}) + \sum_{k=2p+1}^{3p} (n_{5k-10p-3} - n_{5k-10p-4}) \nonumber \\
& + \sum_{k=3p+1}^{4p-j+1} (n_{20p-5k+5} - n_{20p-5k+4}) \Biggr) n_{5j-1} \nonumber \\
& - \sum_{j=1}^{p} \Biggl( \sum_{k=1}^{p} (n_{5k-2} - n_{5k-3}) + \sum_{k=p+1}^{2p-j+1} (n_{10p-5k+4} - n_{10p-5k+3} ) \Biggr) n_{5j-2} \nonumber \\
& = \sum_{j=1}^{p} \Biggl( \sum_{j^{\prime}=j+1}^{p} (n_{5j^{\prime}} - n_{5j^{\prime} - 4}) + n_{5j}) \Biggr) n_{5j-5} \nonumber \\
& + \sum_{j=1}^{p} \Biggl( \sum_{j^{\prime} = 1}^{j} (n_{5j^{\prime} - 1} - n_{5j^{\prime} - 4} ) + \sum_{j^{\prime}=j+1}^{p} (n_{5j^{\prime} - 1} - n_{5j^{\prime} - 3}) \Biggr) n_{5j-4} \nonumber \\
& + \sum_{j=1}^{p} \Biggl( \sum_{j'=1}^{j} (n_{5j^{\prime} - 2} - n_{5j^{\prime}-3}) \Biggr) n_{5j-3} \nonumber \\
& - \sum_{j=1}^{p} \Biggl( \sum_{j^{\prime} = 1}^{j-1} (n_{5j^{\prime} - 1} - n_{5j^{\prime} - 4}) + \sum_{j^{\prime}=j}^{p} (n_{5j^{\prime}} - n_{5j^{\prime} - 4}) \Biggr) n_{5j-1} \nonumber \\
& - \sum_{j=1}^{p} \Biggl( \sum_{j^{\prime}=1}^{j-1} (n_{5j^{\prime} - 2} - n_{5j^{\prime} - 3}) + \sum_{j^{\prime}=j}^{p} (n_{5j^{\prime} - 1} - n_{5j^{\prime} - 3}) \Biggr) n_{5j-2}.
\end{align}

\noindent Thus, combining (\ref{a})--(\ref{lastb1}) implies that $b_1({\underline{n}}) + b_2({\underline{n}})$ equals

\begin{align*}
& p + \sum_{j=1}^{p-1} n_{5j}  + \sum_{j=1}^{p} (n_{5j-4} + n_{5j-3} - n_{5j-2} - n_{5j-1}) \\
& - \sum_{j=1}^{p} \sum_{j^{\prime}=1}^{j} (n_{5j-2} - n_{5j-3})(n_{5j^{\prime} - 6} - n_{5j^{\prime} - 7}) - \sum_{j=1}^{p} \sum_{j^{\prime}=1}^{j} (n_{5j-2} - n_{5j-3})(n_{5j^{\prime} -5} - n_{5j^{\prime} - 6}) \\
& + \sum_{j=1}^{p} \sum_{j^{\prime} = 1}^{j} (n_{5j-1} - n_{5j-2})(n_{5j^{\prime} - 3} - n_{5j^{\prime} - 4}) + \sum_{j=1}^{p} \sum_{j^{\prime}=1}^{j} (n_{5j-1} - n_{5j-2})(n_{5j^{\prime}-4} - n_{5j^{\prime} - 5}) \\
& - \sum_{j=1}^{p} \sum_{j^{\prime} = 1}^{p} (n_{5j-3} - n_{5j-4})(n_{5j^{\prime}-5} - n_{5j^{\prime} - 6}) + \sum_{j=1}^{p} \sum_{j^{\prime} = 1}^{j} (n_{5j} - n_{5j-1})(n_{5j^{\prime}-4} - n_{5j^{\prime} - 5}) \\
& + \sum_{j=1}^{p} \Biggl( \sum_{j^{\prime}=j+1}^{p} (n_{5j^{\prime}} - n_{5j^{\prime} - 4}) + n_{5j}) \Biggr) n_{5j-5} \\
& + \sum_{j=1}^{p} \Biggl( \sum_{j^{\prime} = 1}^{j} (n_{5j^{\prime} - 1} - n_{5j^{\prime} - 4} ) + \sum_{j^{\prime}=j+1}^{p} (n_{5j^{\prime} - 1} - n_{5j^{\prime} - 3}) \Biggr) n_{5j-4}  \\
& + \sum_{j=1}^{p} \Biggl( \sum_{j'=1}^{j} (n_{5j^{\prime} - 2} - n_{5j^{\prime}-3}) \Biggr) n_{5j-3} \\
& - \sum_{j=1}^{p} \Biggl( \sum_{j^{\prime} = 1}^{j-1} (n_{5j^{\prime} - 1} - n_{5j^{\prime} - 4}) + \sum_{j^{\prime}=j}^{p} (n_{5j^{\prime}} - n_{5j^{\prime} - 4}) \Biggr) n_{5j-1} \\
& - \sum_{j=1}^{p} \Biggl( \sum_{j^{\prime}=1}^{j-1} (n_{5j^{\prime} - 2} - n_{5j^{\prime} - 3}) + \sum_{j^{\prime}=j}^{p} (n_{5j^{\prime} - 1} - n_{5j^{\prime} - 3}) \Biggr) n_{5j-2}.
\end{align*}

\section{Proof of Theorem \ref{main1}}

\begin{proof}[Proof of Theorem \ref{main1}] Using Lemmas \ref{l10} and \ref{l11}, one can check that for $l=4mp + 2p - 1$ and $t=4mp - 1$

\begin{equation} \label{a1}
a({\underline{n}}) = \sum_{j=1}^{p - 1} n_{(2m+1)j} + \sum_{j=1}^{p} n_{(2m+1)j - (m+1)} + p - 1
\end{equation}

\noindent and $b_1({\underline{n}}) + b_2({\underline{n}})$ equals

\begin{align}
& 1 - p + n_{(2m+1)p - 1} + \sum_{j=1}^{p} \Biggl( \sum_{i=1}^{m-1} n_{(2m+1)j - 2m + i - 1} - \sum_{i=m+1}^{2m} n_{(2m+1)j - 2m + i - 1} \Biggr) \nonumber \\
& + \sum_{j=1}^{p} \sum_{j'=1}^{j} \sum_{k=1}^{m} \sum_{k'=1}^{k} (n_{(2m+1)j - k} - n_{(2m+1)j - k - 1})(n_{(2m+1)j' + k - 2m - k'} - n_{(2m+1)j' + k - 2m - k' - 1}) \nonumber \\
& - \sum_{j=1}^{p} \sum_{j'=1}^{j} \sum_{k=1}^{m} \sum_{k'=1}^{m-k+1} (n_{(2m+1)j - m - k} - n_{(2m+1)j - m - k - 1})(n_{(2m+1)j' - 2m - k'} - n_{(2m+1)j' - 2m - k' - 1}) \nonumber \\
& + \sum_{s=1}^{m} \sum_{j=1}^{p} \Biggl( \sum_{j'=1}^{j-1} (n_{(2m+1)j' - s} - n_{(2m+1)j' - 2m + s -1}) \nonumber \\
& \quad \quad + \sum_{j'=j}^{p} (n_{(2m+1)j' - s} - n_{(2m+1)j' - 2m + s - 2}) \Biggr) n_{(2m+1)j - 2m + s -2} \nonumber \\
& - \sum_{j=1}^{p - 1} \Biggl( \sum_{j'=j+1}^{p} (n_{(2m+1)j' - 1} - n_{(2m+1)j' - (2m+1)}) + n_{(2m+1)j} \Biggr) n_{(2m+1)j - 1} \nonumber \\
& - \sum_{s=1}^{m} \sum_{j=1}^{p} \Biggl( \sum_{j'=1}^{j} (n_{(2m+1)j' - m + s-1} - n_{(2m+1)j' - m - s}) \\ \nonumber 
& \quad \quad + \sum_{j' = j+1}^{p} (n_{(2m+1)j' - m + s -2} - n_{(2m+1)j' - m - s}) \Biggr) n_{(2m+1)j - m + s -2}.
\end{align}

\noindent Also, by (\ref{kappa}) and (\ref{tau}), $X({\underline{n}})$ equals

\begin{align}  \label{twist3}
& (-1)^{n_{(2m+1)p - 1}} q^{-Nn_{(2m+1)p - 1}} \frac{(q)_{N-1} (q)_{n_{(2m+1)p - 1}}}{(q)_{N - n_{(2m+1)p - 1} - 1}} \prod_{j=1}^{p} \frac{(-1)^{n_{(2m+1)j - 1} - n_{(2m+1)j-m-1}} q^{\frac{1}{2} \sum\limits_{s=1}^{m} S(m,j,s)}}{\displaystyle \prod\limits_{s=1}^{2m} (q)_{n_{(2m+1)j-2m+s-1} - n_{(2m+1)j-2m+s-2}}} \nonumber \\
& \times \prod_{j=1}^{p - 1} \frac{(-1)^{n_{(2m+1)j} - n_{(2m+1)j-1}}}{(q)_{n_{(2m+1)j} - n_{(2m+1)j-1}}}
\end{align}
where

\begin{equation} \label{smjs}
S(m,j,s) := (n_{(2m+1)j-2m+s-1} - n_{(2m+1)j-2m+s-2})(n_{(2m+1)j-2m+s-1} - n_{(2m+1)j-2m+s-2} + 1).
\end{equation}

We first consider the case $m=1$.   Upon comparing (\ref{spt}) and (\ref{a1})--(\ref{smjs}) with (\ref{JNK-m-p}) and then simplifying, it suffices to prove that 

\begin{align} \label{lhsm1}
& \sum_{j=1}^{p} \sum_{j'=1}^{j} (n_{3j-1} - n_{3j-2})(n_{3j'-2} - n_{3j'-3}) - \sum_{j=1}^{p} \sum_{j'=1}^{j} (n_{3j-2} - n_{3j-3})(n_{3j'-3} - n_{3j'-4}) \nonumber \\
& + \sum_{j=1}^{p} \Biggl( \sum_{j'=1}^{j-1} (n_{3j'-1} - n_{3j'-2}) + \sum_{j'=j}^{p} (n_{3j'-1} - n_{3j'-3})\Biggr) n_{3j-3} \nonumber \\
& - \sum_{j=1}^{p-1} \Biggl( \sum_{j'=j+1}^{p} (n_{3j'-1} - n_{3j'-3}) + n_{3j} \Biggr) n_{3j-1} - \sum_{j=1}^{p} \sum_{j'=1}^{j} (n_{3j'-1} - n_{3j'-2}) n_{3j-2} \nonumber \\
& - \sum_{j=1}^{p} n_{3j-2} n_{3j-3} 
\end{align}

\noindent equals

\begin{equation} \label{newrhsm=1}
\displaystyle{\sum_{\substack{1 \leq i < j \leq 3p-1 \\ 3 \nmid i \\ j \not \equiv 1 \pmod*{3}}} \epsilon_{i,j,1} n_i n_j} - \displaystyle \sum_{i=1}^{3p-2} n_i n_{i+1}
\end{equation} 

\noindent where $\epsilon_{i,j,m}$ is given by (\ref{epsilondef}). Here, we have used the fact that

\begin{equation} \label{s1m=1}
- \sum_{j=1}^{p-1} n_{3j-1} =  \sum_{i=1}^{3p-2} \gamma_{i,1} n_i + \sum_{i=1}^{p-1} n_{3i},
\end{equation}

\noindent where $\gamma_{i,m}$ is given by (\ref{gammadef}), together with the identities

\begin{equation}  \label{s2m=1}
\frac{1}{2} \sum_{j=1}^{p} S(1,j,1) = \sum_{j=1}^{p} \binom{n_{3j-3}}{2} + \sum_{j=1}^{p} \binom{n_{3j-2} + 1}{2} - n_{3j-2} n_{3j-3} 
\end{equation}

\noindent where $S(m,j,s)$ is given by (\ref{smjs}) and

\begin{equation} \label{s3m=1}
\sum_{j=1}^{p} \binom{n_{3j - 3}}{2} + \sum_{i=1}^{p-1} n_{3i}= \sum_{i=1}^{p-1} \binom{n_{3i} + 1}{2}.
\end{equation}

We now explain how to proceed from (\ref{lhsm1}) to (\ref{newrhsm=1}). After taking out the $j'=j$ term from the fourth sum in the third line of (\ref{lhsm1}) and simplifying, we obtain

\begin{align} \label{lhsm12}
& \sum_{j=1}^{p} \sum_{j'=1}^{j} n_{3j-1} n_{3j'-2} + \sum_{j=1}^{p} \sum_{j'=1}^{j-1} n_{3j'-1} n_{3j-3} - \sum_{j=1}^{p} \sum_{j'=1}^{j-1} n_{3j'-2} n_{3j-3} - \sum_{j=1}^{p-1} \sum_{j'=j+1}^{p} n_{3j'-1} n_{3j-1} \nonumber \\
& - \sum_{j=1}^{p-1} n_{3j} n_{3j-1} - \sum_{j=1}^{p} n_{3j-2} n_{3j-3} - \sum_{j=1}^{p} n_{3j-1} n_{3j-2}.
\end{align}

\noindent The first line of (\ref{lhsm12}) corresponds to the first sum in (\ref{newrhsm=1}); namely, the first two sums correspond to $(i,j) \equiv (i, -i) \pmod{3}$ and $(i,j) \equiv (i,-i-1) \pmod{3}$, respectively, while the second two sums correspond to $(i,j) \equiv (i,i-1) \pmod{3}$ and $(i,j) \equiv (i,i) \pmod{3}$, respectively. The three sums in the second line of (\ref{lhsm12}) match the second sum of (\ref{newrhsm=1}). Thus, we have proven that (\ref{lhsm1}) equals (\ref{newrhsm=1}).

We now turn to the general case $m \geq 2$.   Upon comparing (\ref{spt}) and (\ref{a1})--(\ref{smjs}) with (\ref{JNK-m-p}) and then simplifying, it suffices to prove that 

\begin{align} \label{lhs}
& \sum_{j=1}^{p} \sum_{j'=1}^{j} \sum_{k=1}^{m} \sum_{k'=1}^{k} (n_{(2m+1)j - k} - n_{(2m+1)j - k - 1})(n_{(2m+1)j' + k - 2m - k'} - n_{(2m+1)j' + k - 2m - k' - 1}) \nonumber \\
& - \sum_{j=1}^{p} \sum_{j'=1}^{j} \sum_{k=1}^{m} \sum_{k'=1}^{m-k+1} (n_{(2m+1)j - m - k} - n_{(2m+1)j - m - k - 1})(n_{(2m+1)j' - 2m - k'} - n_{(2m+1)j' - 2m - k' - 1}) \nonumber \\
& + \sum_{s=1}^{m} \sum_{j=1}^{p} \Biggl( \sum_{j'=1}^{j-1} (n_{(2m+1)j' - s} - n_{(2m+1)j' - 2m + s -1})  \nonumber \\
& \quad \quad + \sum_{j'=j}^{p} (n_{(2m+1)j' - s} - n_{(2m+1)j' - 2m + s - 2}) \Biggr) n_{(2m+1)j - 2m + s -2} \nonumber \\
& - \sum_{j=1}^{p - 1} \Biggl( \sum_{j'=j+1}^{p} (n_{(2m+1)j' - 1} - n_{(2m+1)j' - (2m+1)}) + n_{(2m+1)j} \Biggr) n_{(2m+1)j - 1} \nonumber \\
& - \sum_{s=1}^{m} \sum_{j=1}^{p} \Biggl( \sum_{j'=1}^{j} (n_{(2m+1)j' - m + s-1} - n_{(2m+1)j' - m - s}) \\ \nonumber 
& \quad \quad + \sum_{j' = j+1}^{p} (n_{(2m+1)j' - m + s -2} - n_{(2m+1)j' - m - s}) \Biggr) n_{(2m+1)j - m + s -2} 
\\ \nonumber
& + \sum_{j=1}^{p} \Biggl[ \binom{n_{(2m+1)j-2m} + 1}{2} - n_{(2m+1)j-2m} n_{(2m+1)j-2m-1} + \binom{n_{(2m+1)j-m-2}}{2} \nonumber \\
& \quad \quad - n_{(2m+1)j-m-1} n_{(2m+1)j-m-2} + \frac{1}{2} \sum_{s=2}^{m-1} S(m,j,s) \Biggr] \nonumber
\end{align}

\noindent equals

\begin{equation} \label{newrhs}
\displaystyle{\sum_{\substack{1 \leq i < j \leq (2m+1)p-1 \\ (2m+1) \nmid i \\ j \not \equiv m \pmod*{2m+1}}} \epsilon_{i,j,m} n_i n_j} - \displaystyle \sum_{i=1}^{(2m+1)p-2} n_i n_{i+1}.
\end{equation} 

\noindent Here, we have used the fact that

\begin{equation} \label{s1}
\begin{aligned}
\sum_{j=1}^{p} \Biggl( \sum_{i=1}^{m-1} n_{(2m+1)j - 2m + i-1} & - \sum_{i=m+1}^{2m-1} n_{(2m+1)j - 2m+i-1} \Biggr)  - \sum_{j=1}^{p-1} n_{(2m+1)j-1} \\
& =  \sum_{i=1}^{(2m+1)p-2} \gamma_{i,m} n_i + \sum_{i=1}^{p-1} n_{(2m+1)i},
\end{aligned}
\end{equation}

\noindent together with the identities

\begin{align}  \label{s2}
\frac{1}{2} \sum_{j=1}^{p} \sum_{s=1}^{m} S(m,j,s) & = \sum_{j=1}^{p} \binom{n_{(2m+1)j - 2m-1}}{2} + \binom{n_{(2m+1)j-m-1} + 1}{2} \nonumber \\
& + \sum_{j=1}^{p} \Biggl[ \binom{n_{(2m+1)j-2m} + 1}{2} - n_{(2m+1)j-2m} n_{(2m+1)j-2m-1} + \binom{n_{(2m+1)j-m-2}}{2} \nonumber \\
& \quad \quad - n_{(2m+1)j-m-1} n_{(2m+1)j-m-2} + \frac{1}{2} \sum_{s=2}^{m-1} S(m,j,s) \Biggr] 
\end{align}

\noindent and

\begin{equation} \label{s3}
\sum_{j=1}^{p} \binom{n_{(2m+1)j - 2m-1}}{2} + \sum_{i=1}^{p-1} n_{(2m+1)i}= \sum_{i=1}^{p-1} \binom{n_{(2m+1)i} + 1}{2}.
\end{equation}

We now sketch how to proceed from (\ref{lhs}) to (\ref{newrhs}). For $1 \leq i \leq 9$, let $L_i$ denote the $i$th line of \eqref{lhs}. First note that 
\begin{equation} \label{l8l9}
L_8 + L_9 = \sum_{j=0}^{p-1} \sum_{i=1}^{m-1} n_{(2m+1)j + i}^2 - \sum_{j=0}^{p-1} \sum_{i=1}^{m} n_{(2m+1)j + i}n_{(2m+1)j + i-1}.
\end{equation}
Next, the sum over $k'$ in both $L_1$ and $L_2$ telescopes, and we obtain 
\begin{equation} \label{L1expanded}
\begin{aligned}
L_1 &= \sum_{k=1}^m \sum_{j=1}^p \sum_{j'=1}^j (n_{(2m+1)j - k} - n_{(2m+1)j-k-1})n_{(2m+1)j' + k -2m-1} \\
& \qquad \qquad - \sum_{k=1}^m \sum_{j=1}^p \sum_{j'=1}^j (n_{(2m+1)j - k} - n_{(2m+1)j-k-1})n_{(2m+1)j'-2m-1}
\end{aligned}
\end{equation}
and
\begin{equation} \label{L2expanded}
\begin{aligned}
L_2 &= -\sum_{k=1}^m \sum_{j=1}^p \sum_{j'=1}^j (n_{(2m+1)j - m - k} - n_{(2m+1)j-m-k-1})n_{(2m+1)j' -2m-1} \\
& \qquad \qquad + \sum_{k=1}^m \sum_{j=1}^p \sum_{j'=1}^j (n_{(2m+1)j - m- k} - n_{(2m+1)j-m-k-1})n_{(2m+1)j'-3m + k-2}.
\end{aligned}
\end{equation}
Now the sum over $k$ in the second line of \eqref{L1expanded} and the first line of \eqref{L2expanded} both telescope and so
\begin{equation} \label{L1expandedmore}
\begin{aligned}
L_1 &= \sum_{k=1}^m \sum_{j=1}^p \sum_{j'=1}^j n_{(2m+1)j - k}n_{(2m+1)j' + k -2m-1} - \sum_{k=1}^m \sum_{j=1}^p \sum_{j'=1}^jn_{(2m+1)j-k-1}n_{(2m+1)j' + k -2m-1} \\
&\qquad + \sum_{j=1}^p \sum_{j'=1}^jn_{(2m+1)j - m-1}n_{(2m+1)j'-2m-1} - \sum_{j=1}^p \sum_{j'=1}^j n_{(2m+1)j-1}n_{(2m+1)j'-2m-1}
\end{aligned}
\end{equation}
and
\begin{equation} \label{L2expandedmore}
\begin{aligned}
L_2 &= \sum_{k=1}^m \sum_{j=1}^p \sum_{j'=1}^j n_{(2m+1)j - m- k}n_{(2m+1)j'-3m+k-2} \\
& \qquad - \sum_{k=1}^m \sum_{j=1}^p \sum_{j'=1}^j n_{(2m+1)j-m-k-1}n_{(2m+1)j'-3m+k-2} + \sum_{j=1}^p \sum_{j'=1}^j n_{(2m+1)j - 2m-1}n_{(2m+1)j' -2m-1} \\
& \qquad \qquad - \sum_{j=1}^p \sum_{j'=1}^j n_{(2m+1)j-m-1}n_{(2m+1)j' -2m-1}.
\end{aligned}
\end{equation}
Observe that the third sum in \eqref{L1expandedmore} and the fourth sum in \eqref{L2expandedmore} cancel.    Moreover, if we take $s=1$ in the triple sum in $L_4$,
\begin{equation}
\sum_{s=1}^m \sum_{j=1}^p \sum_{j'=j}^p (n_{(2m+1)j' -s} - n_{(2m+1)j'-2m+s-2})n_{(2m+1)j - 2m + s-2},
\end{equation}
and exchange $j$ and $j'$ we see that this cancels with the fourth sum in \eqref{L1expandedmore} and the third sum in \eqref{L2expandedmore}. Putting this and (\ref{l8l9}) together and expanding all of the sums we find that \eqref{lhs} equals

\begin{align} \label{lhs1}
&\sum_{k=1}^m \sum_{j=1}^p \sum_{j'=1}^j n_{(2m+1)j - k}n_{(2m+1)j' + k -2m-1} - \sum_{k=1}^m \sum_{j=1}^p \sum_{j'=1}^jn_{(2m+1)j-k-1}n_{(2m+1)j' + k -2m-1} \nonumber \\
&+ \sum_{k=1}^m \sum_{j=1}^p \sum_{j'=1}^j n_{(2m+1)j - m- k}n_{(2m+1)j'-3m+k-2} - \sum_{k=1}^m \sum_{j=1}^p \sum_{j'=1}^j n_{(2m+1)j-m-k-1}n_{(2m+1)j'-3m+k-2} \nonumber \\
& + \sum_{s=1}^{m} \sum_{j=1}^{p} \sum_{j'=1}^{j-1} n_{(2m+1)j' - s} n_{(2m+1)j - 2m + s-2} - \sum_{s=1}^{m} \sum_{j=1}^{p} \sum_{j'=1}^{j-1} n_{(2m+1)j' - 2m + s-1} n_{(2m+1)j - 2m + s-2} \nonumber \\
& + \sum_{s=2}^{m} \sum_{j=1}^{p} \sum_{j'=j}^{p} n_{(2m+1)j'-s} n_{(2m+1)j - 2m + s-2} - \sum_{s=2}^{m} \sum_{j=1}^{p} \sum_{j'=j}^{p} n_{(2m+1)j' - 2m + s-2} n_{(2m+1)j - 2m + s-2} \nonumber \\
& - \sum_{j=1}^{p-1} \sum_{j'=j+1}^{p} n_{(2m+1)j'-1} n_{(2m+1)j-1} + \sum_{j=1}^{p-1} \sum_{j'=j+1}^{p} n_{(2m+1)j' - (2m+1)} n_{(2m+1)j-1}\nonumber \\
& - \sum_{j=1}^{p-1} n_{(2m+1)j} n_{(2m+1)j - 1} \nonumber \\
& - \sum_{s=1}^{m} \sum_{j=1}^{p} \sum_{j'=1}^{j} n_{(2m+1)j' - m + s-1} n_{(2m+1)j - m + s-2} + \sum_{s=1}^{m} \sum_{j=1}^{p} \sum_{j'=1}^{j} n_{(2m+1)j' - m - s} n_{(2m+1)j - m + s-2} \nonumber \\
&  - \sum_{s=1}^{m} \sum_{j=1}^{p} \sum_{j'=j+1}^{p} n_{(2m+1)j' - m + s-2} n_{(2m+1)j - m + s-2} + \sum_{s=1}^{m} \sum_{j=1}^{p} \sum_{j'=j+1}^{p} n_{(2m+1)j' - m - s} n_{(2m+1)j - m + s-2} \nonumber \\
& + \sum_{j=0}^{p-1} \sum_{i=1}^{m-1} n_{(2m+1)j + i}^2 - \sum_{j=0}^{p-1} \sum_{i=1}^{m} n_{(2m+1)j + i} n_{(2m+1)j + i-1}.
\end{align}

\noindent In the second sum on the fourth line of (\ref{lhs1}), we exchange $j$ and $j'$ and reindex to obtain

\begin{equation*}
- \sum_{s=2}^{m} \sum_{j=1}^{p} \sum_{j'=1}^{j} n_{(2m+1)j - 2m + s-2} n_{(2m+1)j' - 2m + s-2}.
\end{equation*}

\noindent We then take out the term $j'=j$ and shift the indices in this term by $j \to j+1$ and $s \to s+1$ to cancel with the first sum on the last line of (\ref{lhs1}). In the second line of (\ref{lhs1}), perform the shift $j' \to j' + 1$ and start the sum at $j'=1$ (as $j'=0$ gives $0$) to obtain

\begin{equation} \label{secondline}
\sum_{k=1}^{m} \sum_{j=1}^{p} \sum_{j'=1}^{j-1} n_{(2m+1)j - m - k} n_{(2m+1)j' - m + k-1} - \sum_{k=1}^{m} \sum_{j=1}^{p} \sum_{j'=1}^{j-1} n_{(2m+1)j - m - k -1} n_{(2m+1)j' - m + k-1}.
\end{equation}

\noindent Now, in the second sum of the penultimate line of (\ref{lhs1}), we exchange $j$ and $j'$ and reindex, shift by $s \to s+1$, then remove the $s=0$ term. Note that what remains cancels with the second sum in (\ref{secondline}) after removing the $k=m$ term. In total, this yields that (\ref{lhs}) equals

\begin{align} \label{lhs2}
&\sum_{k=1}^m \sum_{j=1}^p \sum_{j'=1}^j n_{(2m+1)j - k}n_{(2m+1)j' + k -2m-1} - \sum_{k=1}^m \sum_{j=1}^p \sum_{j'=1}^jn_{(2m+1)j-k-1}n_{(2m+1)j' + k -2m-1} \nonumber  \\
&+ \sum_{k=1}^m \sum_{j=1}^p \sum_{j'=1}^{j-1} n_{(2m+1)j - m- k} n_{(2m+1)j'-m+k-1} - \sum_{j=1}^{p} \sum_{j'=1}^{j-1} n_{(2m+1)j - 2m -1} n_{(2m+1)j'-1} \nonumber \\
& + \sum_{s=1}^{m} \sum_{j=1}^{p} \sum_{j'=1}^{j-1} n_{(2m+1)j' - s} n_{(2m+1)j - 2m + s-2} - \sum_{s=1}^{m} \sum_{j=1}^{p} \sum_{j'=1}^{j-1} n_{(2m+1)j' - 2m + s-1} n_{(2m+1)j - 2m + s-2} \nonumber \\
& + \sum_{s=2}^{m} \sum_{j=1}^{p} \sum_{j'=j}^{p} n_{(2m+1)j'-s} n_{(2m+1)j - 2m + s-2} - \sum_{s=2}^{m} \sum_{j=1}^{p} \sum_{j'=1}^{j-1} n_{(2m+1)j' - 2m + s-2} n_{(2m+1)j - 2m + s-2} \nonumber \\
& - \sum_{j=1}^{p-1} \sum_{j'=j+1}^{p} n_{(2m+1)j'-1} n_{(2m+1)j-1} + \sum_{j=1}^{p-1} \sum_{j'=j+1}^{p} n_{(2m+1)j' - (2m+1)} n_{(2m+1)j-1} \nonumber \\
& - \sum_{j=1}^{p-1} n_{(2m+1)j} n_{(2m+1)j - 1} \nonumber \\
& - \sum_{s=1}^{m} \sum_{j=1}^{p} \sum_{j'=1}^{j} n_{(2m+1)j' - m + s-1} n_{(2m+1)j - m + s-2} + \sum_{s=1}^{m} \sum_{j=1}^{p} \sum_{j'=1}^{j} n_{(2m+1)j' - m - s} n_{(2m+1)j - m + s-2} \nonumber \\
&  - \sum_{s=1}^{m} \sum_{j=1}^{p} \sum_{j'=j+1}^{p} n_{(2m+1)j' - m + s-2} n_{(2m+1)j - m + s-2} + \sum_{j=1}^{p} \sum_{j'=1}^{j-1} n_{(2m+1)j - m - 1} n_{(2m+1)j' - m -1}\nonumber \\
& - \sum_{j=0}^{p-1} \sum_{i=1}^{m} n_{(2m+1)j + i} n_{(2m+1)j + i-1}.
\end{align}

\noindent We now simplify further. The $s=1$ term of the first sum in the penultimate line cancels with the second sum in the same line. Remove the $j'=j$ term from the first sum in the seventh line and write it in the last line. The $s=1$ term of the remaining triple sum cancels with the $k=1$ term of the first sum on the second line. The first sum on the fourth line cancels with the second sum of the first line once we remove the $k=m$ term. This $k=m$ term then cancels with the $s=1$ term of the second sum of the seventh line. The first sum in the fifth line is the $s=m+1$ term of the first sum in the penultimate line. The second sum in the fifth line cancels with the second sum in the second line. Finally, the sum in the sixth line is the $i=0$ term in the last line. Thus, (\ref{lhs}) equals

\begin{align} \label{lhs3}
& \sum_{k=1}^{m} \sum_{j=1}^{p} \sum_{j'=1}^{j} n_{(2m+1)j - k} n_{(2m+1)j' + k - 2m - 1} + \sum_{k=2}^{m} \sum_{j=1}^{p} \sum_{j'=1}^{j-1} n_{(2m+1)j - m - k} n_{(2m+1)j' - m + k-1} \nonumber \\
&+ \sum_{s=1}^{m} \sum_{j=1}^{p} \sum_{j'=1}^{j-1} n_{(2m+1)j'-s} n_{(2m+1)j - 2m + s-2} + \sum_{s=2}^{m} \sum_{j=1}^{p} \sum_{j'=1}^{j} n_{(2m+1)j' - m - s} n_{(2m+1)j - m + s-2} \nonumber \\ 
& - \sum_{s=1}^{m} \sum_{j=1}^{p} \sum_{j'=1}^{j-1} n_{(2m+1)j' - 2m + s-1} n_{(2m+1)j - 2m + s-2} \nonumber \\
& - \sum_{s=2}^{m} \sum_{j=1}^{p} \sum_{j'=1}^{j-1} n_{(2m+1)j' - 2m + s-2} n_{(2m+1)j - 2m + s-2} \nonumber \\
& - \sum_{s=2}^{m} \sum_{j=1}^{p} \sum_{j'=1}^{j-1} n_{(2m+1)j' - m + s-1} n_{(2m+1)j - m + s-2} - \sum_{s=1}^{m} \sum_{j=1}^{p} \sum_{j'=j+1}^{p} n_{(2m+1)j' - m + s-1} n_{(2m+1)j - m + s-1} \nonumber \\
 & - \sum_{s=1}^{m} \sum_{j=1}^{p} n_{(2m+1)j - m + s-1} n_{(2m+1)j - m + s-2} - \sum_{j=0}^{p-1} \sum_{i=0}^{m} n_{(2m+1)j + i} n_{(2m+1)j + i-1}.
\end{align}

\noindent Now we see that this is equal to (\ref{newrhs}) as follows. The first five lines of (\ref{lhs3}) correspond to the first term in (\ref{newrhs}); namely, the first line of (\ref{lhs3}) corresponds to $(i,j) \equiv (i,-i) \pmod{2m+1}$ while the second line corresponds to $(i,j) \equiv (i, -i-1) \pmod{2m+1}$. The first sums in the third and fifth lines correspond to $(i,j) \equiv (i,i-1) \pmod{2m+1}$ while the sum
in the fourth line and the second sum in the fifth line correspond to $(i,j) \equiv (i,i) \pmod{2m+1}$. Finally, the sixth line of (\ref{lhs3}) matches the second sum of (\ref{newrhs}). Thus, we have proven that (\ref{lhs}) equals (\ref{newrhs}).

\end{proof}

\section{Proof of Theorem \ref{main2}}

\begin{proof}[Proof of Theorem \ref{main2}] As (\ref{JNK-mp}) reduces to (\ref{t22p+1}) when $m=0$ and this case was proven in \cite{hikami1}, we assume that $m \geq 1$. Using Lemmas \ref{l12} and \ref{l13}, one can check that for $l=4mp + 2p + 1$ and $t=4mp + 1$

\begin{equation} \label{a2}
a({\underline{n}}) = -\sum_{j=1}^{p - 1} n_{(2m+1)j} - \sum_{j=1}^{p} n_{(2m+1)j - (m)} - p
\end{equation}

\noindent and $b_1({\underline{n}}) + b_2({\underline{n}})$ equals

\begin{align}
& p + \sum_{j=1}^{p - 1} n_{(2m+1)j} + \sum_{j=1}^{p} \Biggl( \sum_{i=1}^{m} n_{(2m+1)j - 2m + i - 1} - \sum_{i=m+1}^{2m} n_{(2m+1)j - 2m + i - 1} \Biggr) \nonumber \\
& + \sum_{j=1}^{p} \sum_{j'=1}^{j} \sum_{k=1}^{m} \sum_{k'=1}^{k} (n_{(2m+1)j - k + 1} - n_{(2m+1)j - k}) (n_{(2m+1)j' + k - 2m - k'} - n_{(2m+1)j' + k - 2m - k' - 1}) \nonumber \\
& - \sum_{j=1}^{p} \sum_{j'=1}^{j} \sum_{k=1}^{m} \sum_{k'=1}^{m-k+1} 
(n_{(2m+1)j - m - k + 1} - n_{(2m+1)j - m - k})(n_{(2m+1)j' - 2m - k'} - n_{(2m+1)j' - 2m - k' - 1}) \nonumber \\
& + \sum_{s=1}^{m} \sum_{j=1}^{p} \Biggl( \sum_{j'=1}^{j} ( n_{(2m+1)j' - s} - n_{(2m+1)j' - 2m + s - 1} ) \nonumber \\
& \quad \quad + \sum_{j'=j+1}^{p} ( n_{(2m+1)j' - s} - n_{(2m+1)j' - 2m + s} ) \Biggr) n_{(2m+1)j - 2m + s -1} \nonumber \\
& + \sum_{j=1}^{p} \Biggl( \sum_{j'=j+1}^{p} (n_{(2m+1)j'} - n_{(2m+1)j' - 2m}) + n_{(2m+1)j} \Biggr) n_{(2m+1)j-(2m+1)} \nonumber \\
& - \sum_{s=1}^{m} \sum_{j=1}^{p} \Biggl( \sum_{j'=1}^{j-1} ( n_{(2m+1)j' - s} - n_{(2m+1)j'- 2m + s - 1} ) \nonumber \\
& \quad \quad + \sum_{j'=j}^{p} ( n_{(2m+1)j' - s +1} - n_{(2m+1)j' - 2m + s - 1} ) \Biggr) n_{(2m+1)j - s}.
\end{align}

\noindent Also, by (\ref{kappa}) and (\ref{tau}), $X({\underline{n}})$ equals

\begin{align} \label{twist4}
& (-1)^{n_{(2m+1)p}} q^{-Nn_{(2m+1)p}} \frac{(q)_{N-1} (q)_{n_{(2m+1)p}}}{(q)_{N - n_{(2m+1)p} - 1}} \prod_{j=1}^{p} \frac{(-1)^{n_{(2m+1)j} - n_{(2m+1)j - m}} q^{\frac{1}{2} \sum\limits_{s=1}^{m+1} S(m,j,s)}}{\displaystyle \prod\limits_{s=1}^{2m+1} (q)_{n_{(2m+1)j-2m+s-1} - n_{(2m+1)j-2m+s-2}}}
\end{align}
where $S(m,j,s)$ is given by (\ref{smjs}). 

Upon comparing (\ref{spt}) and (\ref{a2})--(\ref{twist4}) with (\ref{JNK-mp}) and then simplifying, it suffices to prove
that for $m \geq 1$

\begin{align} \label{newlhs2}
& \sum_{j=1}^{p} \sum_{j'=1}^{j} \sum_{k=1}^{m} \sum_{k'=1}^{k} (n_{(2m+1)j - k + 1} - n_{(2m+1)j - k}) (n_{(2m+1)j' + k - 2m - k'} - n_{(2m+1)j' + k - 2m - k' - 1}) \nonumber \\
& - \sum_{j=1}^{p} \sum_{j'=1}^{j} \sum_{k=1}^{m} \sum_{k'=1}^{m-k+1} 
(n_{(2m+1)j - m - k + 1} - n_{(2m+1)j - m - k})(n_{(2m+1)j' - 2m - k'} - n_{(2m+1)j' - 2m - k' - 1}) \nonumber \\
& + \sum_{s=1}^{m} \sum_{j=1}^{p} \Biggl( \sum_{j'=1}^{j} ( n_{(2m+1)j' - s} - n_{(2m+1)j' - 2m + s - 1} ) \nonumber \\
& \quad \quad + \sum_{j'=j+1}^{p} ( n_{(2m+1)j' - s} - n_{(2m+1)j' - 2m + s} ) \Biggr) n_{(2m+1)j - 2m + s -1} \nonumber \\
& + \sum_{j=1}^{p} \Biggl( \sum_{j'=j+1}^{p} (n_{(2m+1)j'} - n_{(2m+1)j' - 2m}) + n_{(2m+1)j} \Biggr) n_{(2m+1)j-(2m+1)} \nonumber \\
& - \sum_{s=1}^{m} \sum_{j=1}^{p} \Biggl( \sum_{j'=1}^{j-1} ( n_{(2m+1)j' - s} - n_{(2m+1)j'- 2m + s - 1} ) \nonumber \\
& \quad \quad + \sum_{j'=j}^{p} ( n_{(2m+1)j' - s +1} - n_{(2m+1)j' - 2m + s - 1} ) \Biggr) n_{(2m+1)j - s} \nonumber \\
& + \sum_{j=1}^{p} \Biggl[ \binom{n_{(2m+1)j-2m} + 1}{2} - n_{(2m+1)j-2m} n_{(2m+1)j-2m-1} + \binom{n_{(2m+1)j-m-1}}{2} \nonumber \\
& \quad \quad - n_{(2m+1)j-m} n_{(2m+1)j-m-1} + \frac{1}{2} \sum_{s=2}^{m} S(m,j,s) \Biggr]
\end{align}

\noindent equals

\begin{equation} \label{Deltasum}
\displaystyle{\sum_{\substack{1 \leq i < j \leq (2m+1)p \\ (2m+1) \nmid i \\ j \not \equiv m+1 \pmod*{2m+1}}} \Delta_{i,j,m} n_i n_j}
\end{equation}

\noindent where $\Delta_{i,j,m}$ is given by (\ref{Deltadef}). Here, we have used (\ref{s3}),

\begin{align}  \label{news2}
\frac{1}{2} \sum_{j=1}^{p} \sum_{s=1}^{m+1} S(m,j,s) & = \sum_{j=1}^{p} \binom{n_{(2m+1)j - 2m-1}}{2} + \binom{n_{(2m+1)j-m} + 1}{2} \nonumber \\
& + \sum_{j=1}^{p} \Biggl[ \binom{n_{(2m+1)j-2m} + 1}{2} - n_{(2m+1)j-2m} n_{(2m+1)j-2m-1} + \binom{n_{(2m+1)j-m-1}}{2} \nonumber \\
& \quad \quad - n_{(2m+1)j-m} n_{(2m+1)j-m-1} + \frac{1}{2} \sum_{s=2}^{m} S(m,j,s) \Biggr] 
\end{align}

\noindent and the fact that

\begin{equation} \label{news1}
\sum_{j=1}^{p} \Biggl( \sum_{i=1}^{m} n_{(2m+1)j - 2m + i-1} - \sum_{i=m+1}^{2m} n_{(2m+1)j - 2m+i-1} \Biggr) =  \sum_{i=1}^{(2m+1)p-1} \beta_{i,m} n_i
\end{equation}

\noindent where $\beta_{i,m}$ is given by (\ref{betadef}). We now sketch how to proceed from (\ref{newlhs2}) to (\ref{Deltasum}). For $1 \leq i \leq 9$, let $\hat{L}_i$ denote the $i$th line of \eqref{newlhs2}. First, note that

\begin{equation} \label{hatl8l9}
\hat{L}_8 + \hat{L}_9 = \sum_{j=0}^{p-1} \sum_{i=1}^{m} n_{(2m+1)j + i}^2 - \sum_{j=0}^{p-1} \sum_{i=1}^{m+1} n_{(2m+1)j + i}n_{(2m+1)j + i-1}.
\end{equation}

\noindent Next, the sum over $k'$ in both $\hat{L}_1$ and $\hat{L}_2$ telescope and we obtain

\begin{equation} \label{L1hatexpanded}
\begin{aligned}
\hat{L}_1 &= \sum_{k=1}^m \sum_{j=1}^p \sum_{j'=1}^j (n_{(2m+1)j - k+1} - n_{(2m+1)j-k}) n_{(2m+1)j' + k -2m-1} \\
& \qquad \qquad - \sum_{k=1}^m \sum_{j=1}^p \sum_{j'=1}^j (n_{(2m+1)j - k+1} - n_{(2m+1)j-k}) n_{(2m+1)j'-2m-1}
\end{aligned}
\end{equation}
and
\begin{equation} \label{L2hatexpanded}
\begin{aligned}
\hat{L}_2 &= -\sum_{k=1}^m \sum_{j=1}^p \sum_{j'=1}^j (n_{(2m+1)j - m - k+1} - n_{(2m+1)j-m-k})n_{(2m+1)j' -2m-1} \\
& \qquad \qquad + \sum_{k=1}^m \sum_{j=1}^p \sum_{j'=1}^j (n_{(2m+1)j - m- k+1} - n_{(2m+1)j-m-k})n_{(2m+1)j'-3m + k-2}.
\end{aligned}
\end{equation}

\noindent Now the sum over $k$ in the second line of \eqref{L1hatexpanded} and the first line of \eqref{L2hatexpanded} both telescope and so
\begin{equation} \label{L1hatexpandedmore}
\begin{aligned}
\hat{L}_1 &= \sum_{k=1}^m \sum_{j=1}^p \sum_{j'=1}^j n_{(2m+1)j - k+1} n_{(2m+1)j' + k -2m-1} - \sum_{k=1}^m \sum_{j=1}^p \sum_{j'=1}^j n_{(2m+1)j-k} n_{(2m+1)j' + k - 2m - 1} \\
&\qquad + \sum_{j=1}^p \sum_{j'=1}^j n_{(2m+1)j - m} n_{(2m+1)j'-2m-1} - \sum_{j=1}^p \sum_{j'=1}^j n_{(2m+1)j}n_{(2m+1)j'-2m-1}
\end{aligned}
\end{equation}
and
\begin{equation} \label{L2hatexpandedmore}
\begin{aligned}
\hat{L}_2 &= \sum_{k=1}^m \sum_{j=1}^p \sum_{j'=1}^j n_{(2m+1)j - m- k+1} n_{(2m+1)j'-3m+k-2} \\
& \qquad - \sum_{k=1}^m \sum_{j=1}^p \sum_{j'=1}^j n_{(2m+1)j-m-k} n_{(2m+1)j'-3m+k-2} \\
& \qquad \qquad + \sum_{j=1}^p \sum_{j'=1}^j n_{(2m+1)j - 2m} n_{(2m+1)j' -2m-1} - \sum_{j=1}^p \sum_{j'=1}^j n_{(2m+1)j-m}n_{(2m+1)j' -2m-1}.
\end{aligned}
\end{equation}

\noindent Observe that the first sum in the second line of (\ref{L1hatexpandedmore}) cancels with the second sum in the third line of (\ref{L2hatexpandedmore}). Combine the remaining double sums, then remove the $j'=j$ term to obtain cancellation with the double sum in $\hat{L}_5$. The second sum in this $j'=j$ term then cancels with the remaining sum in $\hat{L}_5$. Next, the $i=1$ term of the second sum of (\ref{hatl8l9}) cancels with the first sum in this $j'=j$ term. Putting this together and expanding sums, we now have that (\ref{newlhs2}) equals

\begin{align} \label{newlhs3}
& \sum_{k=1}^{m} \sum_{j=1}^{p} \sum_{j'=1}^{j} n_{(2m+1)j -k+1} n_{(2m+1)j' + k -2m-1} - \sum_{k=1}^{m} \sum_{j=1}^{p} \sum_{j'=1}^{j} n_{(2m+1)j - k} n_{(2m+1)j' + k -2m-1} \nonumber \\
& + \sum_{k=1}^{m} \sum_{j=1}^{p} \sum_{j'=1}^{j} n_{(2m+1)j-m-k+1} n_{(2m+1)j'-3m+k-2} - \sum_{k=1}^{m} \sum_{j=1}^{p} \sum_{j'=1}^{j} n_{(2m+1)j - m-k} n_{(2m+1)j'-3m+k-2} \nonumber \\
& + \sum_{s=1}^{m} \sum_{j=1}^{p} \sum_{j'=1}^{j} n_{(2m+1)j'-s} n_{(2m+1)j - 2m + s-1} - \sum_{s=1}^{m} \sum_{j=1}^{p} \sum_{j'=1}^{j} n_{(2m+1)j' - 2m + s-1} n_{(2m+1)j - 2m + s-1} \nonumber \\
& + \sum_{s=1}^{m} \sum_{j=1}^{p} \sum_{j'=j+1}^{p} n_{(2m+1)j'-s} n_{(2m+1)j - 2m + s-1} - \sum_{s=1}^{m} \sum_{j=1}^{p} \sum_{j'=j+1}^{p} n_{(2m+1)j'-2m+s} n_{(2m+1)j - 2m + s-1} \nonumber \\
& - \sum_{s=1}^{m} \sum_{j=1}^{p} \sum_{j'=1}^{j-1} n_{(2m+1)j' - s} n_{(2m+1)j-s} + \sum_{s=1}^{m} \sum_{j=1}^{p} \sum_{j'=1}^{j-1} n_{(2m+1)j' - 2m + s-1} n_{(2m+1)j - s} \nonumber \\
& - \sum_{s=1}^{m} \sum_{j=1}^{p} \sum_{j'=j}^{p} n_{(2m+1)j' - s+1} n_{(2m+1)j-s} + \sum_{s=1}^{m} \sum_{j=1}^{p} \sum_{j'=j}^{p} n_{(2m+1)j' - 2m + s-1} n_{(2m+1)j-s} \nonumber \\
& + \sum_{j=0}^{p-1} \sum_{i=1}^{m} n_{(2m+1)j + i}^2 - \sum_{j=0}^{p-1} \sum_{i=2}^{m+1} n_{(2m+1)j+i} n_{(2m+1)j + i-1}.
 \end{align}

\noindent We combine the $j'=j$ term from the first sum on the third line in (\ref{newlhs3}) with the first sum in the fourth line and then cancel with the second sum in the first line. Next, the $j'=j$ term in the second sum of the third line cancels with the first sum in the last line. Thus, (\ref{newlhs3}) equals

\begin{align} \label{newlhs4}
& \sum_{k=1}^{m} \sum_{j=1}^{p} \sum_{j'=1}^{j} n_{(2m+1)j -k+1} n_{(2m+1)j' + k -2m-1} \nonumber \\
& +  \sum_{k=1}^{m} \sum_{j=1}^{p} \sum_{j'=1}^{j} n_{(2m+1)j-m-k+1} n_{(2m+1)j'-3m+k-2} - \sum_{k=1}^{m} \sum_{j=1}^{p} \sum_{j'=1}^{j} n_{(2m+1)j - m-k} n_{(2m+1)j'-3m+k-2} \nonumber \\
& + \sum_{s=1}^{m} \sum_{j=1}^{p} \sum_{j'=1}^{j-1} n_{(2m+1)j'-s} n_{(2m+1)j - 2m + s-1} - \sum_{s=1}^{m} \sum_{j=1}^{p} \sum_{j'=1}^{j-1} n_{(2m+1)j' - 2m + s-1} n_{(2m+1)j - 2m + s-1} \nonumber \\
& - \sum_{s=1}^{m} \sum_{j=1}^{p} \sum_{j'=j+1}^{p} n_{(2m+1)j'-2m+s} n_{(2m+1)j - 2m + s-1} \nonumber \\
& - \sum_{s=1}^{m} \sum_{j=1}^{p} \sum_{j'=1}^{j-1} n_{(2m+1)j' - s} n_{(2m+1)j-s} + \sum_{s=1}^{m} \sum_{j=1}^{p} \sum_{j'=1}^{j-1} n_{(2m+1)j' - 2m + s-1} n_{(2m+1)j - s} \nonumber \\
& -  \sum_{s=1}^{m} \sum_{j=1}^{p} \sum_{j'=j}^{p} n_{(2m+1)j' - s+1} n_{(2m+1)j-s} + \sum_{s=1}^{m} \sum_{j=1}^{p} \sum_{j'=j}^{p} n_{(2m+1)j' - 2m + s-1} n_{(2m+1)j-s} \nonumber \\
& - \sum_{j=0}^{p-1} \sum_{i=2}^{m+1} n_{(2m+1)j+i} n_{(2m+1)j + i-1}.
\end{align}

\noindent Now, the last line of (\ref{newlhs4}) is the $j'=j$ term of the fourth line. In the second line, perform the shift $j' \to j' +1$ and start the sum at $j'=1$. The second sum in this line then cancels with the second sum in the penultimate line, except for the $j'=j$ term. But this term now becomes the $j'=j$ term for the second sum in the fifth line. After simplifying and gathering terms, we have

\begin{align} \label{newlhs5}
& \sum_{k=1}^{m} \sum_{j=1}^{p} \sum_{j'=1}^{j} n_{(2m+1)j -k+1} n_{(2m+1)j' + k -2m-1} + \sum_{k=1}^{m} \sum_{j=1}^{p} \sum_{j'=1}^{j-1} n_{(2m+1)j - m - k + 1} n_{(2m+1)j' - m + k-1} \nonumber \\
& + \sum_{s=1}^{m} \sum_{j=1}^{p} \sum_{j'=1}^{j-1} n_{(2m+1)j'-s} n_{(2m+1)j - 2m + s-1} + \sum_{s=1}^{m} \sum_{j=1}^{p} \sum_{j'=1}^{j} n_{(2m+1)j' - 2m + s-1} n_{(2m+1)j - s} \nonumber \\
& - \sum_{s=1}^{m} \sum_{j=1}^{p} \sum_{j'=1}^{j-1} n_{(2m+1)j' - 2m + s-1} n_{(2m+1)j - 2m + s-1} - \sum_{s=1}^{m} \sum_{j=1}^{p} \sum_{j'=j}^{p} n_{(2m+1)j'-2m+s} n_{(2m+1)j - 2m + s-1} \nonumber \\
& - \sum_{s=1}^{m} \sum_{j=1}^{p} \sum_{j'=1}^{j-1} n_{(2m+1)j' - s} n_{(2m+1)j-s} - \sum_{s=1}^{m} \sum_{j=1}^{p} \sum_{j'=j}^{p} n_{(2m+1)j' - s+1} n_{(2m+1)j-s}.
\end{align}

\noindent Now we see that this is equal to (\ref{Deltasum}) as follows. The first line of (\ref{newlhs5}) corresponds to $(i,-i+1) \pmod{2m+1}$ while the second line corresponds to $(i,-i) \pmod{2m+1}$. The first sums in the third and fourth lines correspond to $(i,i) \pmod{2m+1}$ while the second sums in the same lines correspond to $(i,i+1) \pmod{2m+1}$. Thus, we have proven that (\ref{newlhs2}) equals (\ref{Deltasum}).

\end{proof}

\section{Proof of Theorem \ref{main3}}
Before proving Theorem \ref{main3}, we briefly review the theory of Bailey pairs \cite{An1,An2}. Two sequences $(\alpha_n,\beta_n)$ are said to form a {\it Bailey pair} relative to $a$ if 
\begin{equation} \label{Baileypairdef}
\beta_n = \sum_{k=0}^n \frac{\alpha_k}{(q)_{n-k}(aq)_{n+k}}.
\end{equation}
The {\it Bailey lemma} says that if $(\alpha_n,\beta_n)$ form a Bailey pair relative to $a$, then so do $(\alpha_n',\beta_n')$, where
\begin{equation} \label{alphaprimedef}
\alpha'_n = \frac{(\rho_1)_n(\rho_2)_n(aq/\rho_1 \rho_2)^n}{(aq/\rho_1)_n(aq/\rho_2)_n}\alpha_n
\end{equation} 

\noindent and

\begin{equation} \label{betaprimedef}
\beta'_n = \sum_{k=0}^n\frac{(\rho_1)_k(\rho_2)_k(aq/\rho_1 \rho_2)_{n-k} (aq/\rho_1 \rho_2)^k}{(aq/\rho_1)_n(aq/\rho_2)_n(q)_{n-k}} \beta_k.
\end{equation}
Iterating \eqref{alphaprimedef} and \eqref{betaprimedef} gives what is called the {\it Bailey chain}.

We shall not require the full power of the Bailey chain, but only two special cases. First, take the Bailey pair relative to $q$ \cite[p.468, B(3)]{Sl1},
\begin{equation} \label{alpha1}
\alpha_n = \frac{(-1)^nq^{n(3n+1)/2}(1-q^{2n+1})}{1-q}
\end{equation}
and
\begin{equation} \label{beta1}
\beta_n = \frac{1}{(q)_n}.
\end{equation}
Iterating (\ref{alpha1}) and (\ref{beta1}) using \eqref{alphaprimedef} and \eqref{betaprimedef} with $\rho_1, \rho_2 \to \infty$ at each step, we find that $(\alpha_n^{(p)},\beta_n^{(p)})$ is a Bailey pair relative to $q$, where
\begin{equation} \label{alphanp1}
\alpha_n^{(p)} = \frac{(-1)^nq^{n(n-1)/2 + p(n^2+n)}(1-q^{2n+1})}{1-q}
\end{equation}
and
\begin{equation} \label{betanp1}
\beta_n^{(p)} = \frac{1}{(q)_n} \sum_{n = n_p \geq n_{p-1} \geq \cdots \geq n_1 \geq 0} \prod_{j=1}^{p-1} q^{n_j^2+n_j} \begin{bmatrix} n_{j+1} \\ n_j \end{bmatrix}.
\end{equation}

Next take the Bailey pair relative to $q$ \cite[Eq. (4.12)]{Wa1},
\begin{equation} \label{alpha2}
\alpha_n = \frac{q^{n^2}(1-q^{2n+1})}{1-q}
\end{equation}
and
\begin{equation} \label{beta2}
\beta_n = \frac{1}{(q)_n^2}.
\end{equation}
Performing the same iteration as above to (\ref{alpha2}) and (\ref{beta2}), we find that $(\alpha_n^{(p)},\beta_n^{(p)})$ is  a Bailey pair relative to $q$, where
\begin{equation} \label{alphanp2}
\alpha_n^{(p)} = \frac{q^{pn^2+ (p-1)n}(1-q^{2n+1})}{1-q}
\end{equation}
and
\begin{equation} \label{betanp2}
\beta_n^{(p)} = \frac{1}{(q)_n} \sum_{n = n_p \geq n_{p-1} \geq \cdots \geq n_1 \geq 0} \frac{1}{(q)_{n_1}} \prod_{j=1}^{p-1} q^{n_j^2+n_j} \begin{bmatrix} n_{j+1} \\ n_j \end{bmatrix}.
\end{equation}

We are now ready to prove Theorem \ref{main3}.

\begin{proof}[Proof of Theorem \ref{main3}]
We began by recalling a formula of Walsh \cite[Cor 4.2.4, corrected]{walsh}.   Namely, for $m \geq 1$ and $p \neq 0$, we have

\begin{equation} \label{WalshcJp}
\begin{aligned}
J_N&(K_{(m,p)};a^2)  \\
&= \frac{a^{2p(1-N^2)}}{[N]} \sum_{n=0}^{N-1} \frac{[N+n]!}{[N-n-1]! [2n+1]!} c_{n,p}' \frac{(-1)^n \{2n+1\}! \{n\}!}{\{1\}(a-a^{-1})^{2n}}\sum_{k=0}^n \frac{[2k+1]}{[n+k+1]![n-k]!}\mu_{2k}^{\frac{2m-1}{2}},
\end{aligned}
\end{equation}
where
\begin{equation*}
\mu_i = a^{\frac{i^2+2i}{2}}, \quad \{n\} = a^n - a^{-n}, \quad [n] = \frac{a^n - a^{-n}}{a-a^{-1}},
\end{equation*}
\begin{equation*}
\{n\}! = \{n\}\{n-1\} \cdots \{1\}, \quad [n]! = [n][n-1] \cdots [1]
\end{equation*}
and
\begin{equation*}
c_{n,p}' = \frac{1}{(a-a^{-1})^n} \sum_{k=0}^n (-1)^k\mu_{2k}^p[2k+1]\frac{[n]!}{[n+k+1]![n-k]!}.
\end{equation*}
We note that the prefactor $a^{2p(1-N^2)}$ and the normalization factor $\frac{1}{[N]}$ are both missing in \cite{walsh}\footnote{We thank Katherine Walsh Hall for providing us with the corrected version.}.   

Some routine (but tedious) simplification shows that \eqref{WalshcJp} can be written as\begin{equation} \label{Jcpndmn}
J_N(K_{(m,p)};q) = q^{p(1-N^2)} \sum_{n \geq 0} q^n(q^{1+N})_n(q^{1-N})_n c_{p,n}(q) d_{m,n}(q),
\end{equation}
where
\begin{equation} \label{ourcpn}
c_{p,n}(q) = (q)_n\sum_{k=0}^n \frac{(-1)^kq^{\binom{k}{2} + p(k^2+k)}(1-q^{2k+1})}{(q)_{n-k}(q)_{n+k+1}}
\end{equation}
and
\begin{equation} \label{ourdmn}
d_{m,n}(q) = (q)_n\sum_{k=0}^n \frac{q^{mk^2+(m-1)k}(1-q^{2k+1})}{(q)_{n-k}(q)_{n+k+1}}.
\end{equation}
Here, we have used that $a^2=q$. Now, recalling (\ref{Baileypairdef}) and comparing \eqref{ourcpn} to \eqref{alphanp1} and \eqref{betanp1}, we have that for $p >0$,
\begin{equation} \label{cpn}
c_{p,n}(q)  = \sum_{n = n_p \geq n_{p-1} \geq \cdots \geq n_1 \geq 0} \prod_{j=1}^{p-1} q^{n_j^2+n_j} \begin{bmatrix} n_{j+1} \\ n_j \end{bmatrix}.
\end{equation}
Similarly, comparing \eqref{ourdmn} to \eqref{alphanp2} and \eqref{betanp2}, we have that for $m >0$,
\begin{equation} \label{dmn}
d_{m,n}(q) = \sum_{n = n_m \geq n_{p-1} \geq \cdots \geq n_1 \geq 0} \frac{1}{(q)_{n_1}} \prod_{j=1}^{m-1} q^{n_j^2+n_j} \begin{bmatrix} n_{j+1} \\ n_j \end{bmatrix}.
\end{equation} 
Inserting (\ref{cpn}) and (\ref{dmn}) in \eqref{Jcpndmn} gives \eqref{cyclotomicmp}.    

For the case $p < 0$, a calculation using the fact that $(1/q;1/q)_n = (q)_n(-1)^nq^{-\binom{n+1}{2}}$ shows that 
\begin{equation} \label{cpn1overq}
c_{p,n}(1/q) = (-1)^nq^{n(n+3)/2}c_{-p,n}(q).
\end{equation}
Using (\ref{cpn1overq}) together with the fact that 
\begin{equation*}
\begin{bmatrix} n \\ k \end{bmatrix} _{1/q} = q^{k^2-nk} \begin{bmatrix} n \\ k \end{bmatrix}_q
\end{equation*}
gives that for $p >0$,
\begin{equation} \label{c-pn}
c_{-p,n}(q) = (-1)^nq^{-n(n+3)/2}  \sum_{n = n_p \geq n_{p-1} \geq \cdots \geq n_1 \geq 0} 
\prod_{j=1}^{p-1} q^{-n_j - n_{j+1}n_j} \begin{bmatrix} n_{j+1} \\ n_j \end{bmatrix}.
\end{equation}
Inserting (\ref{c-pn}) and \eqref{ourdmn} in \eqref{Jcpndmn} gives \eqref{cyclotomicm-p}, which completes the proof.

\end{proof}

\section{Concluding Remarks}
Recall that Habiro \cite{habiro2} showed that for a knot $K$, the colored Jones polynomial has a \emph{cyclotomic expansion} of the form
\begin{equation} \label{cyclo}
J_N(K;q) = \sum_{n \geq 0} (q^{1+N})_n(q^{1-N})_n C_n(K;q),
\end{equation}
where the \emph{cyclotomic coefficients} $C_n(K;q)$ are Laurent polynomials independent of $N$.    The formulas in \eqref{cyclotomicmp} and \eqref{cyclotomicm-p} for $J_N(K_{(m,p)};q)$ and $J_N(K_{(m,-p});q)$ closely resemble the expansion in \eqref{cyclo}, but the coefficients are neither polynomials nor independent of $N$.    It would be highly desirable to find the correct cyclotomic expansions for these knots.    We note that this has already been done by Hikami and the first author in the case of the left-handed torus knots $K_{(1,-p)}$, where we have \cite[Prop. 3.2]{hl1}
\begin{equation}
C_n(K_{(1,-p)};q) =  q^{n+1-p} \sum_{n+1=k_p \geq k_{p-1} \geq \cdots \geq k_1 \geq 1} 
\prod_{i=1}^{t-1} q^{k_i^2}
    \begin{bmatrix}
      k_{i+1} + k_i - i   + 2\sum_{j=1}^{i-1}k_j \\
      k_{i+1} - k_i
    \end{bmatrix} .
\end{equation}

Another topic for future study would be to generalize facts about the Kontsevich-Zagier series \eqref{K-Zseries} to the generalized Kontsevich-Zagier series $\mathbb{F}_{m,p}(q)$ (and/or for $\mathfrak{F}_{m,p}(q)$) .    First, given the relation to the colored Jones polynomial in \eqref{Fatroot}, we conjecture that the $\mathbb{F}_{m,p}(q)$ are quantum modular forms.    Second, as the coefficients of $F(1-q)$ enjoy a wide variety of combinatorial interpretations (see A022493 in \cite{oeis}) and interesting congruence properties \cite{ak, akl, as, garvan, gkr, straub}, it would be of great interest to determine if the same is true for $\mathbb{F}_{m,p}(1-q)$. 

Finally, can one prove Theorems \ref{main1} and \ref{main2} using difference equations? This approach was used in \cite{hikami1, hikami2} to compute (\ref{t22p+1}).

\section*{Acknowledgements}
The authors would like to thank Paul Beirne and Katherine Walsh Hall for their helpful comments and suggestions. The second author would like to thank the Max-Planck-Institut f\"ur Mathematik for their support during the completion of this paper.

\end{document}